\newtheorem{definition}{Definition}[section]
\newtheorem{remark}[definition]{Remark}
\newtheorem{example}[definition]{Example}
\newtheorem*{proof}{Proof}
\newtheorem{Proposition}[definition]{Proposition}
\newtheorem{inttheorem}{Theorem}
\newtheorem{lemma}[definition]{Lemma}
\newtheorem{proposition}[definition]{Proposition}
\newtheorem{theorem}[definition]{Theorem}
\newtheorem{corollary}[definition]{Corollary}
\newenvironment{tabsection}{}{}
\newcommand{\op}[1]{\ensuremath{\operatorname{#1}}}
\newcommand{\cB}{\ensuremath{\mathcal{B}}}
\newcommand{\cC}{\ensuremath{\mathcal{C}}}
\newcommand{\cE}{\ensuremath{\mathcal{E}}}
\newcommand{\cG}{\ensuremath{\mathcal{G}}}
\newcommand{\cH}{\ensuremath{\mathcal{H}}}
\newcommand{\cK}{\ensuremath{\mathcal{K}}}
\newcommand{\cP}{\ensuremath{\mathcal{P}}}
\newcommand{\cR}{\ensuremath{\mathcal{R}}}
 \newcommand{\R}{\ensuremath{\mathbb{R}}}
 \newcommand{\N}{\ensuremath{\mathbb{N}}}
\newcommand{\id}{\ensuremath{\operatorname{id}}}
\newcommand{\pr}{\ensuremath{\operatorname{pr}}}
\newcommand{\ev}{\ensuremath{\operatorname{ev}}}
\newcommand{\cat}[1]{\ensuremath{\mathsf{\mathop{#1}}}}
\newcommand{\Aut}{\ensuremath{\operatorname{Aut}}}
\newcommand{\Diff}{\ensuremath{\operatorname{Diff}}}
\newcommand{\Hom}{\ensuremath{\operatorname{Hom}}}
\newcommand{\res}{\ensuremath{\operatorname{res}}}
\newcommand{\se}{\ensuremath{\nobreak\subseteq\nobreak}}
\newcommand{\from}{\ensuremath{\nobreak\colon\nobreak}}
\renewcommand{\to}{\ensuremath{\nobreak\rightarrow\nobreak}}
\newcommand{\toto}{\ensuremath{\nobreak\rightrightarrows\nobreak}}
\newcommand{\coloneq}{\colonequals}
\DeclareMathOperator{\A}{\Sigma}
\newcommand\opn{\ensuremath{\mathrel{\mathpalette\opncls\circ}}}
\newcommand{\opncls}[2]{
  \ooalign{$#1\subseteq$\cr
  \hidewidth\raisefix{#1}\hbox{$#1{\stylefix{#1}#2}\mkern2mu$}\cr}}
\def\raisefix#1{
  \ifx#1\displaystyle
    \raise.39ex
  \else
    \ifx#1\textstyle
      \raise.39ex
    \else
      \ifx#1\scriptstyle
        \raise.275ex
      \else
        \raise.150ex
      \fi
    \fi
  \fi
}
\def\stylefix#1{
  \ifx#1\displaystyle
    \scriptstyle
  \else
    \ifx#1\textstyle
      \scriptstyle
    \else
      \ifx#1\scriptstyle
        \scriptscriptstyle
      \else
        \scriptscriptstyle
      \fi
    \fi
  \fi
}
\DeclareFontFamily{U}{mathx}{\hyphenchar\font45}
\DeclareFontShape{U}{mathx}{m}{n}{
      <5> <6> <7> <8> <9> <10>
      <10.95> <12> <14.4> <17.28> <20.74> <24.88>
      mathx10
      }{}
\DeclareSymbolFont{mathx}{U}{mathx}{m}{n}
\DeclareMathAccent{\widecheck}{0}{mathx}{"71}
\DeclareMathAccent{\wideparen}{0}{mathx}{"75}
\newcommand{\xyhookrightarrow}[1]{\ar@{}[r]|-*[@]{\xhookrightarrow{\hphantom{\hspace{#1}}}}}
\newcommand{\xyhookrrightarrow}[1]{\ar@{}[rr]|-*[@]{\xhookrightarrow{\hphantom{\hspace{#1}}}}}
\setlist[enumerate]{label={\alph*})}%
\newcommand{\Bis}{\ensuremath{\op{Bis}}}
\newcommand{\eBis}{\ensuremath{\op{\overline{Bis}}}}
\newcommand{\Bisf}[1]{\Bis_{#1}}
\newcommand{\Lf}{\ensuremath{\mathbf{L}}}
\newcommand{\Stab}[1]{\ensuremath{K_{#1}}}
\newcommand{\Vtx}[1]{\ensuremath{\op{Vert}_{#1}}}
\newcommand{\Loop}[1]{\ensuremath{\op{Loop}_{#1}}}
\newcommand{\catBLie}{\cat{BanachLieGpds}_{M}}
\newcommand{\catltBLie}{\cat{BanachLieGpds}_{M}^{\op{triv}}}
\newcommand{\catltevBLie}{\cat{BanachLieGpds}_{M}^{\op{triv}, \ev}}
\begin{document}

\begin{flushright}
   {\sf ZMP-HH/15-16}\\
   {\sf Hamburger$\;$Beitr\"age$\;$zur$\;$Mathematik$\;$Nr.$\;$553}\\[2mm]
\end{flushright}

\title{Functorial aspects of the reconstruction of Lie groupoids from their bisections} \author{Alexander
Schmeding\footnote{NTNU Trondheim, Norway
\href{mailto:alexander.schmeding@math.ntnu.no}{alexander.schmeding@math.ntnu.no}
}%
~~and Christoph Wockel\footnote{University of Hamburg, Germany
\href{mailto:christoph@wockel.eu}{christoph@wockel.eu}}}
{\let\newpage\relax\maketitle}

\begin{abstract}
To a Lie groupoid over a compact base $M$, the associated group of bisection
 is an (infinite-dimensional) Lie group. Moreover, under certain circumstances
 one can reconstruct the Lie groupoid from its Lie group of bisections. In the
 present article we consider functorial aspects of these construction principles. The first
 observation is that this procedure is functorial (for morphisms fixing $M$).
 Moreover, it gives rise to an adjunction between the category of Lie groupoids
 over $M$ and the category of Lie groups acting on $M$. In the last section we
 then show how to promote this adjunction to almost an equivalence of
 categories.
\end{abstract}

\medskip

\textbf{Keywords:} Lie groupoid, infinite-dimensional Lie group,
mapping space, bisection functor, adjoint functors, comonad

\medskip

\textbf{MSC2010:} 18A40  (primary); %
22E65, %
58H05, %
18C15  %
(secondary)

\tableofcontents

\section*{Introduction}

\begin{tabsection}
 
 The Lie group structure on bisection Lie groups was constructed in
 \cite{Rybicki02A-Lie-group-structure-on-strict-groups,SchmedingWockel14},
 along with a smooth action of the bisections on the arrow manifold of a Lie
 groupoid. Furthermore, in \cite{SchmedingWockel15} we have established a tight
 connection between Lie groupoids and infinite-dimensional Lie groups. Namely,
 several (re-)construction principles for Lie groupoids from their group of
 bisections and from infinite-dimensional Lie group actions on a compact
 manifold were provided. The present paper considers the categorical aspects of
 these constructions.
 
 It turns out that the construction principles are functorial, i.e.\ they
 induce functors on suitable categories of (possibly infinite-dimensional) Lie
 groupoids and Lie groups. Moreover, we show that the (re-)construction
 functors together with (a suitable version of) the bisection functor discussed
 in \cite{SchmedingWockel14} yield adjoint pairs. Consequently, we deduce
 properties of the bisection functor from these results. These properties are
 interesting in itself to understand the connection between a Lie groupoid and
 its associated Lie group of bisections. Note that we neglect all higher
 structures on the category of Lie groupoids, we will always work with the
 1-category of Lie groupoids over a fixed base manifold.\medskip
 
 We now go into some more detail and explain the main results. Suppose
 $\cG = (G \toto M)$ is a Lie groupoid. This means that $G,M$ are smooth
 manifolds, equipped with submersions $\alpha,\beta\from G\to M$ and an
 associative and smooth multiplication $G\times _{\alpha,\beta}G\to G$ that
 admits a smooth identity map $1\from M\to G$ and a smooth inversion
 $\iota\from G\to G$. Then the bisections $\Bis(\cG)$ of $\cG$ are the sections
 $\sigma\from M\to G$ of $\alpha$ such that $\beta \circ \sigma$ is a
 diffeomorphism of $M$. This becomes a group with respect to
 \begin{equation*}
  (\sigma \star \tau ) (x) \coloneq \sigma ((\beta \circ \tau)(x))\tau(x)\text{ for }  x \in M.
 \end{equation*}
 If $M$ is compact, $G$ is modelled on a metrisable space and the groupoid
 $\cG$ admits an adapted local addition\footnote{The additional structure
 provided by a local addition allow to turn spaces of smooth maps into
 (infinite-dimensional) manifolds. Using this, one can circumvent to a certain
 degree that the category $\cat{Man}$ of (possibly infinite-dimensional)
 manifolds is not cartesian closed.} (cf.\ \cite{michor1980,SchmedingWockel14}), then this group is a
 submanifold of the space of smooth maps $C^{\infty}(M,G)$ and thus a Lie group
 (cf.\ \cite{SchmedingWockel14}). Moreover, the map
 $\beta_* \colon \Bis (\cG) \rightarrow \Diff (M), \sigma \mapsto \beta \circ \sigma$
 is a Lie group morphism which induces a canonical action of the bisections on
 $M$. This Lie group morphism and the associated action are the key ingredients
 to define the (re-)construction functors and suitable versions of the
 bisection functor (cf.\ \cite[Section 3]{SchmedingWockel14}).
 
 Our first aim is to investigate the so called reconstruction functor. Since
 $\Bis (\cG)$ acts on $M$, we can associate an action groupoid
 $\cB (\cG) \coloneq (\Bis (\cG) \ltimes M \toto M)$ to this action. Denote by
 $\cat{LieGroupoids}_M^{\A}$ the category of locally metrisable Lie groupoids which
 admit an adapted local addition. Then the construction of the action groupoid
 gives rise to an endofunctor
 \begin{displaymath}
  \cB \colon \cat{LieGroupoids}_M^{\A} \rightarrow \cat{LieGroupoids}_M^{\A}.
 \end{displaymath}
 In \cite[Theorem 2.21]{SchmedingWockel15} we have already observed that
 certain Lie groupoids $\cG$ can be recovered from the action groupoid
 $\cB (\cG)$ as a quotient in $\cat{LieGroupoids}_M^{\A}$. Furthermore, the
 joint evaluations
 \begin{equation*}
  \ev \colon \Bis (\cG) \times M \rightarrow G,\quad (\sigma , m) \mapsto \sigma (m)
 \end{equation*}
 (for $\cG = (G \toto M)$) induce a natural transformation from $\cB$ to the
 identity.
 
 To understand the relation of the endofunctor $\cB$ to the identity, consider
 the slice category $\cat{LieGroups}_{\Diff (M)}$ of morphism from locally
 metrisable Lie groups into $\Diff (M)$. Then the construction of the bisection
 Lie group induces a pair of functors
 \begin{displaymath}
  \Bis \colon \cat{LieGroupoids}_M^{\A} \rightarrow \cat{LieGroups}_{\Diff (M)}
  \quad \text{ and } \quad \ltimes \colon \cat{LieGroups}_{\Diff (M)}
  \rightarrow \cat{LieGroupoids}_M^{\A} .
 \end{displaymath}
 The functor $\Bis$ sends a groupoid $\cG$ to the morphism
 $\beta_* \colon \Bis (\cG) \rightarrow \Diff (M)$ while $\ltimes$ constructs
 the action groupoid associated to the Lie group morphism
 $K \rightarrow \Diff (M)$ and the natural action of $\Diff (M)$ on $M$. Note
 that by construction we have $\cB= \ltimes \circ \Bis$. We discuss now the
 relation of the three functors and obtain the following result. \medskip
 
 \begin{inttheorem}\label{thm:A}
  The functor $\ltimes$ is left adjoint to the functor $\Bis$. The adjunction
  is given by mapping the morphism $f\from K\ltimes M\to \cG= (G\toto M)$,
  which is given by a smooth map $f\from K\ltimes M\to G$, to the adjoint map
  $f^{\wedge}\from K\to C^{\infty}(M,G)$, which happens to be an element of
  $\Bis(\cG)$. Consequently, the endofunctor $\cB = \ltimes \circ \Bis$ is a
  comonad on $\cat{LieGroupoids}_M^{\A}$.
 \end{inttheorem}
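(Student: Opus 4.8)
The plan is to establish the adjunction $\ltimes \dashv \Bis$ by exhibiting a natural bijection of hom-sets
\[
 \Hom_{\cat{LieGroupoids}_M^{\A}}(K\ltimes M,\, \cG) \;\cong\; \Hom_{\cat{LieGroups}_{\Diff(M)}}\bigl((K\to \Diff(M)),\, \beta_*\bigr),
\]
natural in both $K$ and $\cG$, and then to derive the comonad structure formally. The key is the adjunction (exponential law) for mapping manifolds: a morphism of action groupoids $f\from K\ltimes M\to G$ (covering $\id_M$ and intertwining the two actions) is the same datum as a smooth map $f\from K\times M\to G$, and by the adapted local addition on $\cG$ its adjoint $f^\wedge\from K\to C^\infty(M,G)$ is smooth. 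The plan is to show that $f^\wedge(k)$ lands in $\Bis(\cG)\se C^\infty(M,G)$ and that $f^\wedge$ is a group morphism into $\Bis(\cG)$ over $\Diff(M)$, and conversely that every such morphism arises this way.

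I would carry this out in the following order. \textbf{First}, unwind the defining conditions on a groupoid morphism $f\from K\ltimes M\to\cG$ fixing $M$: since the source of $K\ltimes M$ is the projection, $\alpha$-compatibility forces $\alpha(f(k,x))=x$, so each $f^\wedge(k)$ is a section of $\alpha$; and since the target of $K\ltimes M$ is the action $(k,x)\mapsto k\cdot x$, $\beta$-compatibility gives $\beta(f(k,x))=k\cdot x=(\theta(k))(x)$, where $\theta\from K\to\Diff(M)$ is the structure morphism of the slice object. Hence $\beta\circ f^\wedge(k)=\theta(k)$ is a diffeomorphism, so $f^\wedge(k)\in\Bis(\cG)$ and $\beta_*\circ f^\wedge=\theta$, i.e.\ $f^\wedge$ lives over $\Diff(M)$. \textbf{Second}, I would check that $f^\wedge$ is a group homomorphism: multiplicativity of $f$ as a groupoid morphism, $f((k,x)\cdot(k',x'))=f(k,x)\,f(k',x')$ whenever composable, translates under the exponential law precisely into $f^\wedge(kk')=f^\wedge(k)\star f^\wedge(k')$ for the bisection product $(\sigma\star\tau)(x)=\sigma(\beta(\tau(x)))\tau(x)$; this is a direct matching of the action-groupoid composition against the $\star$-formula using $\beta\circ f^\wedge(k')=\theta(k')$. \textbf{Third}, I would verify smoothness of $f^\wedge\from K\to\Bis(\cG)$ from smoothness of $f$ via the exponential law for $C^\infty(M,G)$ (using that $M$ is compact and $\cG$ is locally metrisable with adapted local addition, so $\Bis(\cG)$ is a submanifold of $C^\infty(M,G)$), and run the reverse assignment $g\mapsto g^\vee$, $(k,x)\mapsto g(k)(x)$, checking it produces a smooth groupoid morphism; the two assignments are mutually inverse by the usual adjunction, and naturality in $K$ and $\cG$ is then formal. \textbf{Finally}, since $\cB=\ltimes\circ\Bis$ is the composite of a left adjoint followed by its right adjoint, it is automatically a comonad, with comultiplication and counit built from the unit and counit of the adjunction; the counit to the identity is the joint evaluation $\ev\from\Bis(\cG)\ltimes M\to G$ already exhibited in the excerpt.

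\textbf{The main obstacle} I anticipate is not the formal adjunction bookkeeping but the analytic input: one must know that the exponential law $C^\infty(K\times M, G)\cong C^\infty(K, C^\infty(M,G))$ holds in the relevant category of infinite-dimensional manifolds, and—more delicately—that $f^\wedge$ is smooth \emph{as a map into the submanifold $\Bis(\cG)$}, not merely into $C^\infty(M,G)$. This rests on the submanifold structure of $\Bis(\cG)$ furnished by the adapted local addition and on the compactness of $M$; I would isolate this as the technical heart and invoke the constructions of \cite{SchmedingWockel14} to supply it, so that the categorical argument above goes through cleanly.
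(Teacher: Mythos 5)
Your proposal is correct and takes essentially the same approach as the paper: the exponential law gives the correspondence $f\mapsto f^{\wedge}$, the $\beta$-compatibility with the action shows $f^{\wedge}(k)\in\Bis(\cG)$ and $\beta_*\circ f^{\wedge}=\theta$, multiplicativity of the groupoid morphism is matched against the $\star$-product exactly as in the paper's central computation, and the comonad structure is derived formally from the adjunction. The only cosmetic difference is that you exhibit an explicit two-sided inverse $g\mapsto g^{\vee}$, whereas the paper argues injectivity (on the level of sets) plus surjectivity of $f\mapsto f^{\wedge}$ -- the underlying computations are identical.
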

 
 The endofunctor $\cB$ is called \textquotedblleft\emph{re}construction
 functor\textquotedblright \, as one can show that under certain assumptions
 the groupoid $\cG$ is the quotient (in $\cat{LieGroupoids}_M^{\A}$) of
 $\cB (\cG)$ (see Proposition \ref{prop: gpd:quot} for the exact statement).

 We afterwards turn to the question what additional data is needed in addition
 to a Lie group action $H\to \Diff(M)$ in order to build a Lie groupoid that
 has the Lie group $H$ as its bisections. This is what we call the
 ``construction functor''. Observe that the reconstruction functor had a Lie
 groupoid as its input, that we then \emph{re}constructed. So the present
 question is significantly different. In order to simplify matters (for the
 moment), we will consider this question only for transitive (or more precisely
 locally trivial) Lie groupoids. Although locally trivial Lie are always gives
 as gauge groupoids of principal bundles, many morphisms under consideration
 can not be described as morphisms of principal bundles (with fixed structure
 group). However, one can treat these maps as morphisms of (locally trivial)
 Lie groupoids, whence we prefer the groupoid perspective in contrast to the
 principal bundle perspective..
 
 To obtain the construction functor we need to define first the notion of a
 \emph{transitive pair}. Having already fixed the compact manifold $M$, we now
 choose and fix now once and for all an element $m \in M$. From now on we will
 assume that $M$ is connected. A transitive pair $(\theta , H)$ consists of a
 transitive Lie group action $\theta \colon K \times M \rightarrow M$ and a
 normal subgroup $H$ of the $m$-stabiliser $\Stab{m}$ of $\theta$, such that
 $H$ is a regular\footnote{Regularity (in the sense of Milnor) of Lie groups
 roughly means that a certain class of differential equations can be solved on
 the Lie group. This is an essential prerequisite for infinite-dimensional Lie
 theory (see \cite{HGRegLie15} for more information). Up to this point, all
 known Lie groups modelled on suitably complete, i.e.\ Mackey complete, spaces
 are regular.} and co-Banach Lie subgroup of $\Stab{m}$. The guiding example
 here is the transitive pair
 \begin{displaymath}
  \eBis (\cG) \coloneq ( \Bis (\cG) \times M
  \rightarrow M,\, (\sigma,m)\mapsto \beta(\sigma(m)),\quad \{\sigma \in \Bis (\cG) \mid \sigma (m) = 1_m\})
 \end{displaymath}
 induced by the natural action of the bisections of a locally trivial
 Banach-Lie groupoid $\cG$ over a connected manifold $M$. Transitive pairs
 (over $M$ with respect to $m \in M$) together with a suitable notion of
 morphism form a category $\cat{TransPairs}_M$. Note that functor
 sending a transitive pair $(\theta,H)$ to the adjoint morphism
 $\theta^\wedge \colon K \rightarrow \Diff (M)$ induces a forgetful functor
 from $\cat{TransPairs}_M$ to $\cat{LieGroups}_{\Diff (M)}$.
 
 Restricting our attention to the category $\catltBLie$ of locally trivial
 Banach-Lie groupoid over a connected manifold $M$ we obtain the
 \emph{augmented bisection functor}
 \begin{displaymath}
  \eBis \colon \catltBLie \rightarrow \cat{TransPairs}_M.
 \end{displaymath}
 The augmented bisection functor descents via the forgetful functor to
 $\cat{LieGroups}_{\Diff (M)}$ to the functor $\Bis$ (restricted to locally
 trivial Banach-Lie groupoids).
 
 The notion \textquotedblleft transitive pair\textquotedblright is tailored
 in exactly such a way that one can define a \emph{construction functor}
 \begin{displaymath}
  \cR \colon \cat{TransPairs}_M \rightarrow \cat{LieGroupoids}_M^{\A}
 \end{displaymath}
 which associates to a transitive pair a locally trivial Banach Lie groupoid
 (see Section \ref{sect: TP:con} for details). Moreover, the augmented
 bisection functor and the construction functor are closely connected. If we
 apply the functor $\cR$ to $\eBis (\cG)$, we obtain the open subgroupoid of
 $\cG$ of all elements which are contained in the image of a bisection. Hence
 if we assume that $\cG = (G \toto M)$ is a Lie groupoid with bisections
 through each arrow, i.e.\ for all $g \in G$ exists a bisection $\sigma$ with
 $\sigma (\alpha (g)) = g$, then $\cR (\eBis (\cG))$ is isomorphic to $\cG$.
 Denote by $\catltevBLie$ the full subcategory of all locally trivial
 Banach-Lie groupoids with bisections through each arrow. Then our results
 subsume the following theorem. \medskip
 
 \begin{inttheorem}\label{thm:B}
  Let $M$ be a connected and compact manifold. The functor $\cR$ is left
  adjoint to the functor $\eBis$. Furthermore, the functors induce an
  equivalence of categories
  \begin{displaymath}
   \cat{TransPairs}_M \supseteq \eBis (\catltevBLie) \cong \catltevBLie.
  \end{displaymath}
 \end{inttheorem}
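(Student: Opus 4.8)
The plan is to establish the adjunction $\cR\dashv\eBis$ as a natural bijection of hom-sets and then to deduce the equivalence formally from the behaviour of the counit on $\catltevBLie$. Throughout I would use the explicit form of the construction functor, which realises $\cR(\theta,H)$ (for a transitive pair with Lie group $K$) as the gauge groupoid of the principal $\Stab{m}/H$-bundle $K/H\to K/\Stab{m}=M$; here the co-Banach hypothesis on $H\se\Stab{m}$ is what makes the structure group $\Stab{m}/H$ a Banach-Lie group, and the regularity hypothesis guarantees the relevant mapping groups behave well.

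First I would produce the unit $\eta\from\id_{\catTP}\Rightarrow\eBis\circ\cR$. Left translation by $g\in K$ is a principal-bundle automorphism of $K/H$ covering $\theta^\wedge(g)\in\Diff(M)$ and hence defines a bisection $\sigma_g$ of the gauge groupoid $\cR(\theta,H)$. The assignment $g\mapsto\sigma_g$ is a Lie group morphism $K\to\Bis(\cR(\theta,H))$ covering $\theta^\wedge$; moreover it carries $H$ into the stabiliser subgroup $\{\sigma:\sigma(m)=1_m\}$ of $\eBis(\cR(\theta,H))$, because an element $h\in H$, being normal in $\Stab{m}$, acts trivially on the fibre $\Stab{m}/H$ over $m$, so $\sigma_h$ fixes the identity arrow $1_m$. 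This is precisely a morphism of transitive pairs $\eta_{(\theta,H)}\from(\theta,H)\to\eBis(\cR(\theta,H))$, natural in $(\theta,H)$, and the candidate adjunction bijection sends a groupoid morphism $F\from\cR(\theta,H)\to\cG$ to $\eBis(F)\circ\eta_{(\theta,H)}$.

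The heart of the argument, and the step I expect to be the main obstacle, is to invert this. A morphism of transitive pairs $\psi\from(\theta,H)\to\eBis(\cG)$ is the same as a Lie group morphism $\psi\from K\to\Bis(\cG)$ with $\beta_*\circ\psi=\theta^\wedge$ and $\psi(H)\se\{\sigma:\sigma(m)=1_m\}$. Writing an arrow of $\cR(\theta,H)$ as a class $[kH,k'H]$, the only possible value is
\[
  F\bigl([kH,k'H]\bigr)\coloneq\psi(k')(m)\cdot\psi(k)(m)^{-1},
\]
an arrow of $\cG$ from $k\Stab{m}$ to $k'\Stab{m}$, so that $F$ covers $\id_M$ and, by a one-line cancellation, is multiplicative. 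The content is well-definedness. Independence of the $\Stab{m}/H$-action is automatic, since for $n\in\Stab{m}$ the morphism property together with $n\cdot m=m$ gives $\psi(kn)(m)=\psi(k)(m)\cdot\psi(n)(m)$, and the $\psi(n)(m)$-factors cancel. Independence of the representative of the coset $kH$ is exactly where the transitive-pair hypothesis enters: for $h\in H$ one gets $\psi(kh)(m)=\psi(k)(m)\cdot\psi(h)(m)=\psi(k)(m)$ because $\psi(h)(m)=1_m$. Thus $\psi(H)\se\{\sigma:\sigma(m)=1_m\}$ is precisely the condition making $F$ descend through the quotient defining $\cR$. Smoothness of $F$ and local triviality of its image, which is the genuinely technical point, follow from local triviality of $\cG$ together with the co-Banach and regularity assumptions; uniqueness is clear, since every arrow of $\cR(\theta,H)$ lies in the image of some $\sigma_g$, where $F$ is forced by $\psi$. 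Checking that $F\mapsto\eBis(F)\circ\eta$ and $\psi\mapsto F$ are mutually inverse and natural in both variables then yields $\cR\dashv\eBis$.

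For the equivalence I would argue formally. By the discussion preceding the theorem, $\cR(\eBis(\cG))$ consists of the arrows of $\cG$ lying in the image of some bisection, so for $\cG\in\catltevBLie$ the counit $\epsilon_\cG\from\cR(\eBis(\cG))\to\cG$ is an isomorphism. Since a right adjoint is fully faithful precisely when its counit is invertible, the restriction of $\eBis$ to $\catltevBLie$ is fully faithful; concretely, for $\cG,\cG'\in\catltevBLie$ the composite
\[
  \Hom_{\catltBLie}(\cG,\cG')\xrightarrow{\;(\epsilon_\cG)^{*}\;}\Hom_{\catltBLie}(\cR\eBis(\cG),\cG')\xrightarrow{\;\cong\;}\Hom_{\catTP}(\eBis(\cG),\eBis(\cG'))
\]
is a bijection and, by the triangle identities, equals the action of $\eBis$ on morphisms. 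A fully faithful functor is an equivalence onto its image, which gives the asserted equivalence $\catltevBLie\cong\eBis(\catltevBLie)$.
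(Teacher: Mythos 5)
Your proposal is correct and follows essentially the same route as the paper: your unit $\eta_{(\theta,H)}$ is the paper's left-translation morphism $a_{\theta,H}$ (Lemma~\ref{lem:canonical_morphism_into_bisections}), your explicit formula for $F$ is precisely the paper's $\chi_\cG \circ \cR(\psi)$ with the counit $\chi$ of Lemma~\ref{lem: chi:funct}, and your two hom-set maps coincide with $I_{(\theta,H),\cG}$ and $J_{\cG,(\theta,H)}$ in Proposition~\ref{prop: adj:funct}. The only difference is in packaging: you verify well-definedness of $F$ by hand where the paper cites functoriality of $\cR$ and prior results on $\chi_\cG$, and you deduce the equivalence formally (counit invertible on $\catltevBLie$, hence $\eBis$ fully faithful there, hence an equivalence onto its image) where the paper exhibits explicit restricted quasi-inverses --- both arguments resting on the same fact that $\chi_\cG$ is an isomorphism exactly when $\cG$ admits bisections through each arrow.
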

 
 The category equivalence in Theorem \ref{thm:B} shows that transitive pairs
 completely describe locally trivial Banach-Lie groupoids which admit sections
 through each arrow. Hence the geometric information can be reconstructed from
 the transitive pair. This result connects infinite-dimensional Lie theory and
 groupoid theory (or equivalently principal bundle theory) and we explore first
 applications of this result in \cite[Section 5]{SchmedingWockel15}. However,
 equally important is the fact that the functors $\cR$ and $\eBis$ form an
 adjoint pair. The adjointness relation allows one to generate a wealth of
 geometrically interesting morphisms from (infinite-dimensional) Lie groups
 into groups of bisections of locally trivial Banach-Lie groupoids.
 
 Finally, we remark that results similar to Theorem \ref{thm:B} can also be
 obtained for a non-connected manifold $M$. However, then one has to deal with
 several technical difficulties in the construction, forcing one to restrict to
 certain full subcategories. We have avoided this to streamline the exposition
 but will briefly comment on these results at the end of Section \ref{sect:
 constr:fun}.
\end{tabsection}

\section{Locally convex Lie groupoids and the bisection functor}
\label{sec:locally_convex_lie_groupoids_and_lie_groups}

\begin{tabsection}
 In this section the Lie theoretic notions and conventions used throughout this paper are recalled. We refer to
 \cite{Mackenzie05General-theory-of-Lie-groupoids-and-Lie-algebroids} for an
 introduction to (finite-dimensional) Lie groupoids and the associated group of
 bisections. The notation for Lie groupoids and their structural maps also
 follows \cite{Mackenzie05General-theory-of-Lie-groupoids-and-Lie-algebroids}.
 However, we do not restrict our attention to finite dimensional Lie groupoids.
 Hence, we have to augment the usual definitions with several comments. Note
 that we will work all the time over a fixed base manifold $M$.
 
 We use the so called Michal-Bastiani calculus (often also called Keller's~$C^r_c$-theory). 
 As the present paper explores the functorial aspects of certain (re)construction principles, 
 details on the calculus (see \cite{hg2002a,BertramGlocknerNeeb04Differential-calculus-over-general-base-fields-and-rings}) are of minor importance and thus omitted. 
 However, there is a chain rule for smooth maps in this setting, whence there exists an associated concept of locally
  convex manifold, i.e., a Hausdorff space that is locally homeomorphic to open
  subsets of locally convex spaces with smooth chart changes. 
  Recall that the associated category $\cat{Man}$ of locally convex (possibly infinite-dimensional) manifolds with smooth maps is not cartesian closed.
  See
  \cite{Wockel13Infinite-dimensional-and-higher-structures-in-differential-geometry,neeb2006,hg2002a}
  for more details.
\end{tabsection}

 \begin{definition}  
  Let $M$ be a smooth manifold. Then $M$ is called \emph{Banach} (or
  \emph{Fr\'echet}) manifold if all its modelling spaces are Banach (or
  Fr\'echet) spaces. The manifold $M$ is called \emph{locally metrisable} if the
  underlying topological space is locally metrisable (equivalently if all
  modelling spaces of $M$ are metrisable).
  \end{definition}
  
\begin{definition}
 Let $\cG = (G \toto M)$ be a groupoid over $M$ with source projection
 $\alpha \colon G \rightarrow M$ and target projection
 $\beta \colon G \rightarrow M$. Then $\cG$ is a \emph{(locally convex and
 locally metrisable) Lie groupoid over $M$} if
 \begin{itemize}
  \item the objects $M$ and the arrows $G$ are locally convex and locally
        metrisable manifolds,
  \item the smooth structure turns $\alpha$ and $\beta$ into surjective
        submersions, i.e., they are locally projections\footnote{This implies
        in particular that the occurring fibre-products are submanifolds of the
        direct products, see \cite[Appendix
        C]{Wockel13Infinite-dimensional-and-higher-structures-in-differential-geometry}.}
  \item the partial multiplication
        $m \colon G \times_{\alpha,\beta} G \rightarrow G$, the object
        inclusion $1 \colon M \rightarrow G$ and the inversion
        $\iota \colon G \rightarrow G$ are smooth.
 \end{itemize}
Define $\cat{LieGroupoids}_{M}$ to be the category of all (locally convex and
 locally metrisable) Lie groupoids over $M$ where the morphisms are given by Lie groupoid morphisms over $\id_M$ (cf.\ \cite[Definition 1.2.1]{Mackenzie05General-theory-of-Lie-groupoids-and-Lie-algebroids}).
\end{definition}

\begin{definition}
Let $\cG$ be a locally convex and locally metrisable Lie groupoid.
The \emph{group of bisections} $\Bis (\cG)$ of $\cG$ is given as the set of sections
 $\sigma \colon M \rightarrow G$ of $\alpha$ such that
 $\beta \circ \sigma \colon M \rightarrow M$ is a diffeomorphism. This is a
 group with respect to
 \begin{equation}\label{eq: BISGP1}
  (\sigma \star \tau ) (x) \coloneq \sigma ((\beta \circ \tau)(x))\tau(x)\text{ for }  x \in M.
 \end{equation}
 The object inclusion $1 \colon M \rightarrow G$ is then the neutral element
 and the inverse element of $\sigma$ is
 \begin{equation}\label{eq: BISGP2}
  \sigma^{-1} (x) \coloneq \iota( \sigma ((\beta \circ\sigma)^{-1} (x)))\text{ for } x \in M.
 \end{equation}
\end{definition}

\begin{tabsection}
 If $\cG$ is a Lie groupoid over a compact base $M$, then
 \cite{SchmedingWockel14} establishes Lie group structure on the group of
 bisections if $\cG$ admits a certain type of local addition. We recall these
 results now. A local addition is a tool used to construct a manifold structure
 on a space of smooth mappings, whence under certain circumstances we can
 circumvent that $\cat{Man}$ is not cartesian closed (see e.g.\
 \cite{Wockel13Infinite-dimensional-and-higher-structures-in-differential-geometry,conv1997,michor1980}).
\end{tabsection}

   \begin{definition}\label{def:local_addition}
   Suppose $N$ is a smooth manifold. Then a \emph{local addition} on $N$ is a
   smooth map $\A\from U\opn TN\to N$, defined on an open neighbourhood $U$ of
   the submanifold $N\se TN$ such that
   \begin{enumerate}
   \item \label{def:local_addition_a} $\pi\times \A\from U\to N\times N$,
   $v\mapsto (\pi(v),\A(v))$ is a diffeomorphism onto an open
   neighbourhood of the diagonal $\Delta N\se N\times N$ and
   \item \label{def:local_addition_b} $\A(0_{n})=n$ for all $n\in N$.
   \end{enumerate}
   We say that $N$ \emph{admits a local addition} if there exist a local addition
   on $N$.
   \end{definition}
  
 To turn the subset $\Bis (\cG)$ of $C^\infty (M,G)$ into a manifold we need to require that the local addition is adapted to the groupoid structure maps.
 
  \begin{definition}(cf.\
  \cite[10.6]{michor1980}) Let $s\from Q\to N$ be a surjective submersion. Then
  a \emph{local addition adapted to $s$} is a local addition
  $\A \from U\opn TQ\to Q$ such that the fibres of $s$ are additively closed
  with respect to $\A$, i.e.\ $\A (v_{q})\in s^{-1}(s(q))$ for all $q\in Q$ and
  $v_{q}\in T_{q}s^{-1}(s(q))$ (note that $s^{-1}(s(q))$ is a submanifold of
  $Q$).
 \end{definition}

\begin{definition}
 A Lie groupoid $\cG=(G\toto M)$ admits an \emph{adapted local
 addition} if $G$ admits a local addition which is adapted to the source
 projection $\alpha$ (or, equivalently, to the target projection $\beta$). 
 Denote by $\cat{LieGroupoids}_{M}^{\A}$ the full subcategory of $\cat{LieGroupoids}_{M}$ whose objects are Lie groupoids over
 $M$ that admit an adapted local addition.
\end{definition}

An adapted local addition exists for every Banach Lie groupoid by \cite[Proposition 3.12]{SchmedingWockel14}, i.e.\ for the category $\catBLie$ of Banach Lie groupoids over $M$ this implies
\begin{displaymath}
 \catBLie \subseteq \cat{LieGroupoids}_{M}^{\A}
\end{displaymath}

Furthermore, we recall the following facts on locally convex Lie groupoids and the Lie group structure of their bisection groups (cf.\ \cite[Section 3]{SchmedingWockel14}):

\begin{proposition}
 \label{prop: A} Suppose $M$ is compact and $\cG=(G  \toto M)$ is a locally
 convex and locally metrisable Lie groupoid over $M$ which admits an adapted
 local addition. Then $\Bis (\cG)$ is a submanifold of $C^{\infty}(M,G)$ and
 this structure turns $\Bis(\cG)$ into a Lie group.
\end{proposition}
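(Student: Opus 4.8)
The plan is to build the smooth structure in two stages — first on $C^\infty(M,G)$, then cutting out $\Bis(\cG)$ — and only afterwards to verify smoothness of the group operations, where the failure of $\cat{Man}$ to be cartesian closed is the one genuine difficulty. For the first stage: since $M$ is compact and $G$ admits a local addition $\A$, the set $C^\infty(M,G)$ becomes a locally convex manifold in the standard fashion (that of \cite{michor1980}), a chart around $f\in C^\infty(M,G)$ identifying the maps $C^\infty$-close to $f$ with an open zero-neighbourhood of sections $\xi$ of the pullback bundle $f^*TG$, via $\xi\mapsto\big(x\mapsto\A(\xi(x))\big)$. Here compactness of $M$ ensures that $\Gamma^\infty(f^*TG)$ is an admissible (metrisable, locally convex) model space and that the chart changes are smooth.

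Second, I would exhibit the sections of $\alpha$, that is $\Gamma^\infty(\alpha)\coloneq\{\sigma\in C^\infty(M,G)\mid \alpha\circ\sigma=\id_M\}$, as a submanifold, and this is exactly where the hypothesis that $\A$ be \emph{adapted} to $\alpha$ enters. As $\alpha$ is a submersion, the vertical bundle $T^\alpha G=\ker T\alpha$ is a complemented subbundle of $TG$ (with complement $\cong\alpha^*TM$). Fixing $\sigma\in\Gamma^\infty(\alpha)$, adaptedness guarantees $\A(v_q)\in\alpha^{-1}(\alpha(q))$ whenever $v_q$ is vertical; combined with property (a) of the local addition (so that $\A$ restricts to a diffeomorphism of a fibre of $TG$ onto a neighbourhood in $G$) this forces the maps near $\sigma$ that are again sections of $\alpha$ to correspond precisely to those $\xi\in\Gamma^\infty(\sigma^*TG)$ with values in $\sigma^*T^\alpha G$. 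This produces a submanifold chart modelled on the complemented subspace $\Gamma^\infty(\sigma^*T^\alpha G)$, so $\Gamma^\infty(\alpha)$ is a submanifold of $C^\infty(M,G)$.

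Third, $\Bis(\cG)=\{\sigma\in\Gamma^\infty(\alpha)\mid\beta\circ\sigma\in\Diff(M)\}$ arises by imposing an \emph{open} condition: the pushforward $\beta_*\from\Gamma^\infty(\alpha)\to C^\infty(M,M)$, $\sigma\mapsto\beta\circ\sigma$, is smooth, and for compact $M$ the diffeomorphisms form an open subset of $C^\infty(M,M)$. Hence $\Bis(\cG)=\beta_*^{-1}(\Diff(M))$ is open in $\Gamma^\infty(\alpha)$ and therefore a submanifold of $C^\infty(M,G)$; this settles the first assertion.

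Finally, for the group structure I would check smoothness of \eqref{eq: BISGP1} and \eqref{eq: BISGP2}. Writing the product pointwise as $x\mapsto m\big(\sigma(\beta\tau(x)),\tau(x)\big)$ — a composable pair because $\sigma$ is an $\alpha$-section, and landing again in $\Bis(\cG)$ by the group axioms already recorded — one factors $\star$ through the precomposition $(\sigma,\tau)\mapsto\sigma\circ(\beta_*\tau)$, the fibrewise pairing with $\tau$ into $C^\infty(M,G\times_{\alpha,\beta}G)$, and the pushforward $m_*$; inversion factors analogously through $\iota_*$, precomposition with $(\beta_*\sigma)^{-1}$, and inversion in $\Diff(M)$. \textbf{The main obstacle} is precisely the smoothness of these composition maps: because $\cat{Man}$ is not cartesian closed, neither the joint composition $C^\infty(M,G)\times\Diff(M)\to C^\infty(M,G)$, $(\sigma,\phi)\mapsto\sigma\circ\phi$, nor the pushforwards $g_*$ are smooth for formal reasons, and it is here that compactness of $M$ together with the mapping-manifold machinery must do the real work. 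Once these composition and pushforward lemmas are in hand, $\star$ and $\sigma\mapsto\sigma^{-1}$ are smooth as composites of smooth maps (and smooth into the submanifold $\Bis(\cG)$ by its initial-submanifold property), so $\Bis(\cG)$ is a Lie group.
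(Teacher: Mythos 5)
Your proposal is correct and takes essentially the same route as the paper's source: the paper states Proposition \ref{prop: A} without proof, recalling it from \cite[Section 3]{SchmedingWockel14}, and the argument there is exactly your decomposition --- Michor-style charts on $C^{\infty}(M,G)$ via the local addition, adaptedness producing submanifold charts for the $\alpha$-sections modelled on sections of the vertical bundle $\ker T\alpha$, openness of $\Bis(\cG)$ inside the section manifold via $\beta_*$ and the openness of $\Diff(M)$ in $C^{\infty}(M,M)$ for compact $M$, and finally smoothness of $\star$ and inversion by factoring through the composition and pushforward lemmas that compactness and the local additions make available. No gaps to report.
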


This construction gives rise to the bisection functor 

\begin{definition}\label{defn:functorial_interpretation}
 Suppose $M$ is a compact manifold.
 Then the construction of the Lie group of bisections gives rise to a functor 
 \begin{equation*}
  \Bis\from \cat{LieGroupoids}_{M}^{\A}\to \cat{LieGroups}, 
 \end{equation*}
 sending a groupoid $\cG$ to $\Bis (\cG)$ and morphisms $\varphi \colon \cG \rightarrow \cG'$ to $\Bis (\varphi) \colon \Bis (\cG) \rightarrow \Bis (\cG'),\ \sigma \mapsto \varphi \circ \sigma$.
 Here $\cat{LieGroups}$ denotes the category of locally convex Lie groups.
\end{definition}
 
 In the following sections we will study the bisection functor and its relation to functors which we call (re-)construction functors. 
 The leading idea here is that to a certain extend it is possible to reconstruct Lie groupoids from their group of bisections (cf.\ \cite{SchmedingWockel15}).

\section{The bisection functor and the Reconstruction functor}
\label{sec:reconstruction_functor}

\begin{tabsection}
 In this section, we study the reconstruction functor which arises from the canonical action of the bisections on the base manifold.
 We will assume through this section that $M$ is compact and $\cG=(G\toto M)$ is a Lie groupoid in the category $\cat{LieGroupoids}_{M}^{\A}$, i.e.\ $\cG$ is a locally metrisable Lie groupoid which admits an adapted local addition.
\end{tabsection}

\begin{definition}[The bisection action groupoid]\label{defn: bis:act}
 The Lie group $\Bis(\cG)$ has a natural smooth action on $M$, induced by
 $(\beta_{\cG})_{*}\from \Bis(\cG)\to \Diff(M)$ and the natural action of
 $\Diff(M)$ on $M$. This gives rise to the action Lie groupoid
 $\cB(\cG):=\Bis(\cG)\ltimes M$, with source and target projections defined by
 $\alpha_{\cB}(\sigma,m)=m$ and $\beta_{\cB}(\sigma,m)=\beta_{\cG}(\sigma(m))$.
 The multiplication on $\cB(\cG)$ is defined by
 \begin{equation*}
  (\sigma,\beta_{\cG}(\tau(m)))\cdot (\tau,m):=(\sigma \star \tau,m).
 \end{equation*}
 \end{definition}
 
 \begin{remark}\label{rem: bis:act}
 Clearly, any morphism $f\from \cG\to \cH$ of Lie groupoids over $M$ induces a
 morphism $\Bis (f)\times \id_{M}\from \cB(\cG)\to \cB(\cH)$ of Lie groupoids.
 
 Moreover, $\cB(\cG)$ also admits an adapted local addition (for $\alpha_{\cB}$
 and thus also for $\beta_{\cB}$). In fact, this is the case for the Lie group
 $\Bis(\cG)$ and the finite-dimensional manifold $M$ separately, and on
 $\Bis(\cG)\times M$ one can simply take the product of these local additions.
 We may thus interpret $\cB$ as an endofunctor
 \begin{equation*}
  \cB\from \cat{LieGroupoids}_{M}^{\A} \to \cat{LieGroupoids}_{M}^{\A}.
 \end{equation*}
 In addition, the evaluation map $\ev \from \Bis(\cG)\times M\to G, (\sigma , m) \mapsto \sigma (m)$ is a
 morphism of Lie groupoids over $M$
 \begin{equation*}
  \left(\vcenter{  \xymatrix{
  \Bis(\cG)\times M \ar@<1.2ex>[d]^{\beta _{\cB}}\ar@<-1ex>[d]_{\alpha_{\cB}} \\
  M
  }}\right)\xrightarrow{\ev}
  \left(\vcenter{\xymatrix{
  G \ar@<1.2ex>[d]^{\beta _{\cG}}\ar@<-1ex>[d]_{\alpha _{\cG}}\\
  M       
  }}\right),
 \end{equation*}
 which we may interpret as a natural transformation
 $\ev\from \cB\Rightarrow \id$.
 \end{remark}

 In order to understand the categorical structure of the functor $\cB$ and the natural 
 transformation  $\ev\from \cB\Rightarrow \id$ we augment the bisection functor to a functor into a certain slice category.
 
 \begin{definition}
 Define the slice category
 $\cat{LieGroups}_{\Diff(M)}$, in which objects are locally convex and locally
 metrisable Lie groups $K$ that are equipped with a homomorphisms
 $\varphi\from K\to \Diff(M)$. A morphism from $\varphi\from K\to \Diff (M)$ to
 $\varphi'\from K\to \Diff(M)$ is a morphism of locally convex Lie groups
 $\psi\from K\to K'$ such that $\varphi= \varphi' \circ \psi$.
 \end{definition}
 
 Clearly, the functor $\Bis$ induces a functor
 \begin{equation*}
  \Bis\from \cat{LieGroupoids}_{M}^{\A}\to \cat{LieGroups}_{\Diff(M)},\quad (G\toto M)\mapsto (\beta_{*}\from \Bis(\cG)\to \Diff(M)).
 \end{equation*}
 By abuse of notation we will also call this functor the \emph{bisection functor}, as the bisection functor defined in Definition \ref{defn:functorial_interpretation} can be recovered by an application of the forgetful functor $\cat{LieGroups}_{\Diff(M)} \rightarrow \cat{LieGroups}$.
  
 \begin{definition}
 For each object $\varphi\from K\to \Diff(M)$ of $\cat{LieGroups}_{\Diff(M)}$ we
 can construct the action groupoid $K\ltimes M$, which admits an adapted local
 addition by the same argument as for the bisection action groupoid in Remark \ref{rem: bis:act}.
 This gives rise to a functor
 \begin{equation*}
  \ltimes \from \cat{LieGroups}_{\Diff(M)}\to \cat{LieGroupoids}_{M}^{\Sigma}
 \end{equation*}
 which maps a morphism $\psi$ in $\cat{LieGroups}_{\Diff(M)}$ to $\psi \times \id_M$.
\end{definition}

 The construction of $\ltimes$ and the bisection functor above is tailored to yield $\cB=\ltimes \circ \Bis$.
 
\begin{theorem}\label{thm:adjunction}
 The functor $\ltimes$ is left adjoint to the functor $\Bis$. The adjunction is
 given by mapping the morphism $f\from K\ltimes M\to \cG= (G\toto M)$, which is given by a
 smooth map $f\from K\ltimes M\to G$, to the adjoint map
 $f^{\wedge}\from K\to C^{\infty}(M,G)$, which happens to be an element of
 $\Bis(\cG)$.
\end{theorem}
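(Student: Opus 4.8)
The plan is to exhibit, for each object $\varphi\from K\to \Diff(M)$ of $\cat{LieGroups}_{\Diff(M)}$ and each groupoid $\cG=(G\toto M)$ in $\cat{LieGroupoids}_{M}^{\A}$, a bijection $\Hom_{\cat{LieGroupoids}_{M}^{\A}}(K\ltimes M,\cG)\cong \Hom_{\cat{LieGroups}_{\Diff(M)}}(\varphi,\Bis(\cG))$ that is natural in both variables. First I would unravel the two hom-sets. A morphism in the left-hand set over $\id_{M}$ is a smooth map $f\from K\times M\to G$ satisfying $\alpha_{\cG}(f(k,m))=m$, $\beta_{\cG}(f(k,m))=\varphi(k)(m)$, the multiplicativity relation $f(kk',m)=f(k,\varphi(k')(m))\cdot f(k',m)$ and the unit relation $f(e,m)=1_{m}$; a morphism in the right-hand set is a Lie group homomorphism $g\from K\to \Bis(\cG)$ with $\beta_{*}\circ g=\varphi$.

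Next I would define the correspondence by currying, $f^{\wedge}(k)(m)\coloneq f(k,m)$, and verify the purely algebraic part of the bijection. The source condition says exactly that each $f^{\wedge}(k)$ is a section of $\alpha_{\cG}$, and the target condition says $\beta_{\cG}\circ f^{\wedge}(k)=\varphi(k)$; since $\varphi(k)\in\Diff(M)$, this simultaneously shows $f^{\wedge}(k)\in\Bis(\cG)$ and $\beta_{*}\circ f^{\wedge}=\varphi$. Comparing the multiplicativity relation for $f$ with the group law \eqref{eq: BISGP1} on $\Bis(\cG)$ shows that $f^{\wedge}$ is a homomorphism, and the unit relation shows $f^{\wedge}(e)=1$. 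Running these equivalences backwards, the inverse assignment sends $g$ to the uncurried map $g^{\vee}(k,m)\coloneq g(k)(m)$; the two assignments are visibly mutually inverse, being currying and uncurrying.

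The heart of the proof, and the main obstacle, is smoothness: because $\cat{Man}$ is not cartesian closed, I cannot invoke a general exponential law but must appeal to the one available in this setting. Since $M$ is compact and $G$ carries an adapted local addition, the results of \cite{SchmedingWockel14} underlying Proposition \ref{prop: A} give that currying restricts to a bijection between $C^{\infty}(K\times M,G)$ and $C^{\infty}(K,C^{\infty}(M,G))$, so that $f$ is smooth if and only if $f^{\wedge}\from K\to C^{\infty}(M,G)$ is. It then remains to pass between $C^{\infty}(M,G)$ and its submanifold $\Bis(\cG)$: as $\Bis(\cG)$ is a submanifold of $C^{\infty}(M,G)$ by Proposition \ref{prop: A} and $f^{\wedge}$ takes values in it, $f^{\wedge}$ is smooth as a map into $\Bis(\cG)$, while conversely the inclusion $\Bis(\cG)\hookrightarrow C^{\infty}(M,G)$ is smooth so that smoothness of $g$ yields smoothness of $g^{\vee}$. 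I expect this interplay between the exponential law and the submanifold structure to require the most care.

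Finally I would check naturality. Pre-composing $f$ with $\psi\times\id_{M}$ for a morphism $\psi$ of $\cat{LieGroups}_{\Diff(M)}$ corresponds under currying to pre-composing $f^{\wedge}$ with $\psi$, and post-composing $f$ with a groupoid morphism $h\from\cG\to\cH$ corresponds to post-composing $f^{\wedge}$ with $\Bis(h)$; both are immediate from the pointwise formula $f^{\wedge}(k)(m)=f(k,m)$. This establishes the natural bijection of hom-sets and hence the adjunction $\ltimes\dashv\Bis$. One then checks directly that the resulting counit is the natural transformation $\ev\from\cB\Rightarrow\id$ of Remark \ref{rem: bis:act}.
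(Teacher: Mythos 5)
Your proposal is correct and follows essentially the same route as the paper's proof: both establish the adjunction via the currying bijection $f\mapsto f^{\wedge}$, identify the multiplicativity relation for groupoid morphisms with the group law \eqref{eq: BISGP1} on $\Bis(\cG)$, use the target condition $\beta_{\cG}\circ f^{\wedge}(k)=\varphi(k)\in\Diff(M)$ to see that $f^{\wedge}$ lands in $\Bis(\cG)$, and rely on the exponential law together with the submanifold structure from Proposition \ref{prop: A} for the smoothness statements. Your write-up is merely more explicit than the paper's (which declares naturality, injectivity and smoothness to be clear), and your closing identification of the counit with $\ev$ matches Remark \ref{rem: adj:counit}.
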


\begin{proof}
 It is clear that $f\mapsto f^{\wedge}$ is natural and injective, since this is
 also the case on the level of morphisms of sets. Thus it remains to show that
 it is surjective and well defined (i.e., $f^\wedge(k)$ is in fact a bisection
 for each $k\in K$ and $k\mapsto f^{\wedge}(k)$ is a homomorphism).
 
 To show surjectivity, assume that some Lie group morphism
 $\psi\from K\to \Bis(\cG)\se C^{\infty}(M,G)$ is given. Then the other adjoint
 $\psi^{\vee}\from K\times M\to G$ is smooth and satisfies
 \begin{multline}\label{eqn4}
  \psi^{\vee}((kk',m))=\psi(kk')(m)=(\psi(k)\star \psi(k'))(m)=\\  \psi(k)(\beta(\psi(k')(m))) \cdot \psi(k')(m)=\psi(k)(k'.m)\cdot\psi(k')(m)=\psi^{\vee}((k,k'.m))\cdot \psi^{\vee}(k',m).
 \end{multline}
 Thus $\psi^{\vee}$ is a morphism of Lie groupoids and we have
 $\psi=(\psi^{\vee})^{\wedge}$.
 
 To verify that $f\mapsto f^{\wedge}$ is well-defined, we first observe that
 for each $k\in K$ the map $m\mapsto \beta( f^{\wedge}(k)(m))=\varphi(k)(m)$ is
 a diffeomorphism by the assumption $\varphi\from K\to \Diff(M)$. It is clear
 that $ f^{\wedge} $ is smooth and that it is a homomorphism of groups follows
 from the adjoint equation to \eqref{eqn4}.
\end{proof}

\begin{remark}\label{rem: adj:counit}
 The counit of the adjunction $\ltimes \dashv\Bis$ is given by the natural
 transformations
 \begin{equation*}
  \ev\from  \Bis(\cG)\ltimes M\to G.
 \end{equation*}
 Indeed, if we take the adjoint $\id^{\vee}$ of the identity
 $\id\from \Bis(\cG)\to \Bis(\cG) $, then we get exactly $\ev$. Likewise, the
 unit of $\ltimes \dashv\Bis$ is given by the natural transformation
 \begin{equation}
  \op{const}\from (\varphi\from K\to \Diff(M))\to ((\beta_{K \ltimes M})_{*}\from \Bis(K\ltimes M)\to \Diff(M)),
 \end{equation}
 which maps an element $k\in K$ to the ``constant'' bisection $m\mapsto (k,m)$
 of $K\ltimes M$.
\end{remark}

\begin{corollary}
 The functor $\cB = \ltimes \circ \Bis$ gives rise to a comonad $(\cB , \ev \colon \cB \Rightarrow \id, \ltimes (\op{const} \Bis)) \colon \cB \Rightarrow \cB \circ \cB)$ (see \cite[3. Proposition 1.6]{BarrWellsTriples05}). 
 Here for $\cG \in \cat{LieGroupoids}_M^{\A}$ the natural transformation $\ltimes (\op{const}_{\Bis (\cG)})$ is given by the formula 
  \begin{displaymath}
   \ltimes (\op{const}_{\Bis (\cG)}) \colon \Bis (\cG) \ltimes M \rightarrow \Bis (\Bis (\cG) \ltimes M) \ltimes M , (\sigma,m) \mapsto (x \mapsto (\sigma ,x) , m)
  \end{displaymath}
\end{corollary}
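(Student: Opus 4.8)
The plan is to derive the comonad structure entirely from the adjunction $\ltimes \dashv \Bis$ established in Theorem~\ref{thm:adjunction}, together with the purely categorical fact that every adjunction induces a comonad on the domain of its right adjoint. First I would recall the general mechanism: for an adjunction $F \dashv G$ with unit $\eta \from \id \Rightarrow GF$ and counit $\epsilon \from FG \Rightarrow \id$, the endofunctor $FG$ carries a canonical comonad structure whose counit is $\epsilon$ and whose comultiplication is the whiskered transformation $F\eta G \from FG \Rightarrow FGFG$, the comonad axioms being precisely the two triangle identities of the adjunction. This is exactly the content of the cited \cite[3. Proposition 1.6]{BarrWellsTriples05}. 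Specialising to $F = \ltimes$ and $G = \Bis$, and using $\cB = \ltimes \circ \Bis$, this immediately yields a comonad $(\cB, \epsilon, \delta)$ on $\cat{LieGroupoids}_M^{\A}$ (the codomain of $\ltimes$, equivalently the domain of $\Bis$).

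The second step is to identify the two structure transformations with the explicit maps named in the statement. By Remark~\ref{rem: adj:counit} the counit of $\ltimes \dashv \Bis$ is the evaluation $\ev \from \cB \Rightarrow \id$, so the comonad counit is exactly $\ev$. The same remark identifies the unit $\eta$ of the adjunction with the transformation $\op{const}$. Consequently the comultiplication is the whiskering $\delta = \ltimes\,\op{const}\,\Bis$, whose component at an object $\cG$ is $F(\eta_{G\cG}) = \ltimes(\op{const}_{\Bis(\cG)})$, a morphism from $\ltimes\Bis(\cG) = \Bis(\cG)\ltimes M$ to $\ltimes\Bis(\Bis(\cG)\ltimes M) = \Bis(\Bis(\cG)\ltimes M)\ltimes M$, as required by the stated target.

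Finally I would verify the explicit formula by unwinding these two ingredients. Applied to the object $\Bis(\cG)$ of $\cat{LieGroups}_{\Diff(M)}$, the component $\op{const}_{\Bis(\cG)} \from \Bis(\cG) \to \Bis(\Bis(\cG)\ltimes M)$ sends a bisection $\sigma$ to the ``constant'' bisection $x \mapsto (\sigma, x)$ of $\Bis(\cG)\ltimes M$. Since $\ltimes$ turns a morphism $\psi$ into $\psi \times \id_M$, the map $\ltimes(\op{const}_{\Bis(\cG)}) = \op{const}_{\Bis(\cG)} \times \id_M$ sends $(\sigma, m)$ to $((x \mapsto (\sigma, x)), m)$, which is precisely the claimed formula. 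I do not expect any genuine obstacle: once the adjunction of Theorem~\ref{thm:adjunction} is in hand the statement is formal, and the only points demanding care are choosing the correct whiskering $F\eta G$ for the comultiplication (rather than the monad multiplication $G\epsilon F$, which lives on the other category) and faithfully substituting the explicit descriptions of $\ev$ and $\op{const}$ recorded in Remark~\ref{rem: adj:counit}.
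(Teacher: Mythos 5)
Your proposal is correct and follows exactly the paper's route: the corollary is obtained by specialising the standard fact that an adjunction $F \dashv G$ induces a comonad $(FG, \epsilon, F\eta G)$ on the domain of $G$ (the cited result in Barr--Wells) to $\ltimes \dashv \Bis$ from Theorem~\ref{thm:adjunction}, and then substituting the explicit counit $\ev$ and unit $\op{const}$ recorded in Remark~\ref{rem: adj:counit}. Your unwinding of $\ltimes(\op{const}_{\Bis(\cG)})$ as $\op{const}_{\Bis(\cG)} \times \id_M$ matches the stated formula, so there is nothing to add.
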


\begin{corollary}
 The functor
 $\Bis\from \cat{LieGroupoids}_{M}^{\Sigma}\to \cat{LieGroups}_{\Diff(M)}$
 preserves limits. In particular, kernels and pull-backs are preserved.
\end{corollary}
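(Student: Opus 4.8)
The plan is to deduce this entirely formally from Theorem \ref{thm:adjunction}. There we showed that $\ltimes$ is left adjoint to $\Bis$, so $\Bis$ is a right adjoint functor, and the assertion is then an instance of the general principle that right adjoints preserve limits. Accordingly, I would phrase the conclusion carefully: I do not claim that limits exist in either category (existence of limits of infinite-dimensional Lie groupoids is a delicate matter), but only that \emph{whenever} a limit exists in $\cat{LieGroupoids}_{M}^{\A}$, the functor $\Bis$ carries it to a limit in $\cat{LieGroups}_{\Diff(M)}$.

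First I would record the abstract statement in the form needed here. Let $D\from J\to \cat{LieGroupoids}_{M}^{\A}$ be a (small) diagram admitting a limit cone $(\cL,(p_j)_j)$. I claim that $(\Bis(\cL),(\Bis(p_j))_j)$ is a limit cone over $\Bis\circ D$. Given any competing cone $(K,(q_j)_j)$ with apex an object $\varphi\from K\to \Diff(M)$ and legs $q_j\from K\to \Bis(D_j)$, I would transport each leg across the adjunction bijection of Theorem \ref{thm:adjunction} to obtain morphisms $q_j^{\vee}\from K\ltimes M\to D_j$. Naturality of the bijection in the groupoid variable guarantees that the $q_j^{\vee}$ again form a cone, now over $D$ with apex $K\ltimes M$. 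The universal property of $(\cL,(p_j)_j)$ then provides a unique $u\from K\ltimes M\to\cL$ with $p_j\circ u=q_j^{\vee}$, and transporting $u$ back yields a morphism $u^{\wedge}\from K\to\Bis(\cL)$ with $\Bis(p_j)\circ u^{\wedge}=q_j$; uniqueness of $u^{\wedge}$ is inherited from the bijectivity of $f\mapsto f^{\wedge}$. This is exactly the standard proof that right adjoints preserve limits, so in the write-up I would simply invoke the adjunction bijection together with its naturality (or cite a standard categorical reference) rather than reprove it.

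The specialisation to kernels and pull-backs is then immediate, since each of these is a particular limit over a small diagram: a pull-back is the limit of a cospan, and the kernel of a morphism $\psi\from K\to K'$ in $\cat{LieGroups}_{\Diff(M)}$ is the equalizer of $\psi$ with the trivial homomorphism. Whenever such limits exist in the source category, the argument above applies verbatim. I do not anticipate any genuine obstacle here; the only point requiring care is the one already flagged, namely to state preservation in the conditional form ``$\Bis$ preserves all limits that exist'', so that the corollary remains meaningful without asserting completeness of $\cat{LieGroupoids}_{M}^{\A}$ or $\cat{LieGroups}_{\Diff(M)}$.
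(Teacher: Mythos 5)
Your proposal is correct and matches the paper's (implicit) argument exactly: the corollary is stated as an immediate consequence of Theorem \ref{thm:adjunction} via the standard fact that right adjoints preserve whatever limits exist, which is precisely your route. Your added care about stating preservation conditionally, without claiming completeness of either category, is a sensible refinement but not a departure from the paper.
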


At the end of this section we would like to recall briefly some results from \cite{SchmedingWockel15} to make sense of the term \textquotedblleft reconstruction functor\textquotedblright\
for the endofunctor $\cB$.
The idea behind this is that in certain circumstances, one can recover the groupoid $\cG$ from its bisection action groupoid $\cB (\cG)$ via the natural transformation $\ev$.
To make this explicit, recall the notion of a quotient object in a category.

\begin{remark}
 Each category carries a natural notion of quotient object for an internal equivalence
 relation. If $\cC$ is a category with finite products and $ R\se E\times E$ is
 an internal equivalence relation, then the quotient $E\to E/R$ in $\cC$
 (uniquely determined up to isomorphism) is, if it exists, the coequaliser of
 the diagram
 \begin{equation}\label{eqn1}
  \xymatrix{ R \ar@<.8ex>[r]^{\pr_{1}} \ar@<-.8ex>[r]_{\pr_{2}} & E}.
 \end{equation}
 If, in the case that the quotient exists, \eqref{eqn1} is also the pull-back
 of $E\to E/R$ along itself, then the quotient $E\to E/R$ is called
 \emph{effective} (see \cite[Appendix.1]{Mac-LaneMoerdijk94Sheaves-in-geometry-and-logic} for
 details). 
\end{remark}

In general, $\cG$ will only be a quotient of $\cB (\cG)$ if there are enough bisections, i.e.\ if for every arrow $g \in G$ (with $\cG = (G \toto M)$ there is a bisection $\sigma_g \in \Bis (\cG)$ with $\sigma_g (\alpha (g))=g$.
If $\cG$ is a Lie groupoid with this property, we say that $\cG$ \emph{admits bisections through each arrow}.
A sufficient criterion for this is that the groupoid $\cG$ is source connected, i.e.\ that for each $m \in M$ the source fibre $\alpha^{-1} (m)$ is a connected manifold.  

\begin{lemma}[{{\cite[Theorem 2.14]{SchmedingWockel15}}}]\label{lem: sc:nuff}
 If $\cG$ is a source connected Lie groupoid in $\cat{LieGroupoids}_M^{\A}$ over a compact base $M$, then $\cG$ admits bisections through each arrow.
\end{lemma}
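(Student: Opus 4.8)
The plan is to study the set $\Omega \se G$ of all arrows lying on some bisection, i.e.\ $\Omega \coloneq \ev(\Bis(\cG) \times M)$, and to show $\Omega = G$. Since any section of $\alpha$ satisfies $\alpha(\sigma(x)) = x$, an arrow $g$ lies on a bisection $\sigma$ if and only if $\sigma(\alpha(g)) = g$; hence $g \in \Omega$ is exactly the statement ``there is a bisection through $g$''. The first step is to observe that $\Omega$ is a subgroupoid of $\cG$. It contains every unit $1_m = 1(m)$ via the unit bisection; it is closed under inversion because $\sigma^{-1}$ is again a bisection and, by \eqref{eq: BISGP2}, $\sigma^{-1}(\beta(\sigma(m))) = \iota(\sigma(m))$; and it is closed under the partial multiplication because, for composable $g = \sigma(\alpha(g))$ and $h = \tau(\alpha(h))$ with $\alpha(g) = \beta(h)$, formula \eqref{eq: BISGP1} gives $(\sigma \star \tau)(\alpha(h)) = \sigma(\beta(\tau(\alpha(h)))) \cdot \tau(\alpha(h)) = \sigma(\alpha(g)) \cdot h = gh$.

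The key analytic step is that $\Omega$ is open, which I would deduce from the claim that the evaluation map $\ev \from \Bis(\cG) \times M \to G$ is a submersion. Using Proposition~\ref{prop: A}, the tangent space of $\Bis(\cG)$ at $\sigma$ is the space of smooth sections of $\sigma^*(\ker d\alpha)$ (a variation of $\sigma$ through sections of $\alpha$ is vertical for $\alpha$), and the derivative of $\ev$ at $(\sigma, m)$ sends $(X, v)$ to $X(m) + d_m\sigma(v)$. The first summand ranges over the full vertical space $\ker d_{\sigma(m)}\alpha$, since pointwise evaluation of vector-bundle sections is a split surjection, while $d_m\sigma$ yields a complement mapping isomorphically onto $T_m M$ under $d\alpha$. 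Together they span $T_{\sigma(m)}G$, so $\ev$ is a submersion and $\Omega = \im(\ev)$ is open in $G$.

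It remains to run a connectedness argument in each source fibre. Fix $m$ and put $\Omega_m \coloneq \Omega \cap \alpha^{-1}(m)$, which is open and nonempty (it contains $1_m$) in the connected manifold $\alpha^{-1}(m)$. To see that $\Omega_m$ is closed, let $g$ lie in its closure. The smooth map $g' \mapsto g\,\iota(g')$ on $\alpha^{-1}(m)$ sends $g$ to the unit $1_{\beta(g)} \in \Omega$, so by openness of $\Omega$ there is a neighbourhood $W$ of $g$ in $\alpha^{-1}(m)$ with $g\,\iota(g') \in \Omega$ for all $g' \in W$. Choosing $g' \in \Omega_m \cap W$ and writing $g = (g\,\iota(g')) \cdot g'$ exhibits $g$ as a product of two elements of the subgroupoid $\Omega$, whence $g \in \Omega_m$. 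Thus $\Omega_m$ is open and closed in the connected fibre $\alpha^{-1}(m)$, so $\Omega_m = \alpha^{-1}(m)$; as $m$ was arbitrary, $\Omega = G$ and $\cG$ admits bisections through each arrow.

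The main obstacle is the submersion property of $\ev$: this is where the adapted-local-addition hypothesis enters, through Proposition~\ref{prop: A} and the resulting submanifold and tangent-space description of $\Bis(\cG)$. The subgroupoid and open--closed arguments are then purely formal. The one point meriting care is verifying a genuine \emph{split} surjection (rather than mere surjectivity of the derivative) in the infinite-dimensional setting, but this follows from the standard fact that evaluation of smooth sections of a locally trivial vector bundle at a point is a submersion.
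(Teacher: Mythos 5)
This paper does not actually prove Lemma \ref{lem: sc:nuff}; it imports it from \cite[Theorem 2.14]{SchmedingWockel15}, so the comparison is with the proof given there. Your overall architecture coincides with it: set $\Omega \coloneq \ev(\Bis(\cG)\times M)$, check that $\Omega$ is a wide subgroupoid (your verification via \eqref{eq: BISGP1} and \eqref{eq: BISGP2} is correct), show $\Omega$ is open in $G$, and run a clopen argument in each source fibre. Your closedness trick via $g'\mapsto g\,\iota(g')$ is correct and nicely packaged, but note that it genuinely requires openness of $\Omega$ \emph{in $G$}: the arrows $g\,\iota(g')$ leave the fibre $\alpha^{-1}(m)$, so fibrewise openness of $\Omega_m$ would not suffice. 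Everything therefore hinges on the one analytic claim.

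That claim is where the gap lies. You deduce ``$\ev$ is a submersion, hence $\Omega=\operatorname{im}(\ev)$ is open'' from a pointwise computation showing that $T_{(\sigma,m)}\ev$ is a split surjection. But the category $\cat{LieGroupoids}_M^{\A}$ consists of locally convex, locally metrisable Lie groupoids --- Fr\'echet and worse, not just Banach --- and $\Bis(\cG)$ is in general not a Banach manifold. In this setting there is no inverse function theorem, and ``submersion'' is defined in this paper as ``locally a projection''; split surjectivity of the differential at every point implies neither that $\ev$ is a submersion in this sense nor that its image is open. (Your closing caveat identifies the wrong risk: the issue is not split versus non-split surjectivity of the derivative, but that even a split-surjective derivative gives no local normal form beyond the Banach case.) Incidentally, the identification $T_\sigma\Bis(\cG)\cong\Gamma(\sigma^*\ker T\alpha)$ is also not contained in Proposition \ref{prop: A} as stated; it comes from \cite{SchmedingWockel14}. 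The proof in \cite{SchmedingWockel15} closes exactly this hole by working with the adapted local addition directly: in the chart of $\Bis(\cG)$ around $\sigma$ given by $X\mapsto \Sigma\circ X$ for $X$ in a neighbourhood of $0$ in $\Gamma(\sigma^*\ker T\alpha)$, the evaluation reads $(X,x)\mapsto\Sigma(X(x))$, and bump-function extensions of vertical vectors, combined with the fact that an adapted local addition restricts to a local addition on each $\alpha$-fibre (so that $(\pi,\Sigma)$ is a diffeomorphism onto a neighbourhood of the diagonal fibrewise), produce bisections through every arrow in an entire $G$-neighbourhood of $\sigma(M)$. The diffeomorphism property of the local addition is precisely the substitute for the missing inverse function theorem, and supplying this construction is the real content of the openness step. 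If you restricted the statement to Banach--Lie groupoids, your derivative argument would go through; for the category in which the lemma is stated, it does not.
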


In \cite{SchmedingWockel15} we were then able to prove the following result on groupoids with enough bisections.

\begin{proposition}[{{\cite[Theorem 2.21]{SchmedingWockel15}}}]\label{prop: gpd:quot}
 If $\cG$ is a Lie groupoid with a bisection through each arrow in $G$, e.g.\ $\cG$ is source connected, then the
 morphism $\ev\from \cB(\cG)\to \cG$ is the quotient in $\cat{LieGroupoids}_M^{\A}$ of $\cB(\cG)$ by
 \begin{equation*}
  R=\{(\sigma,m),(\tau,m)\in \Bis(\cG)\times \Bis(\cG)\times M\mid \sigma(m)=\tau(m) \}.
 \end{equation*}
\end{proposition}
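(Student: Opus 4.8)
The plan is to identify $R$ with the kernel pair of $\ev$ and then to verify directly that $\ev$ satisfies the universal property of the coequaliser of $R\toto\cB(\cG)$. Since every morphism of $\cat{LieGroupoids}_M^{\A}$ is the identity on objects and $\ev\from\cB(\cG)\to\cG$ is such a morphism, the pullback of $\ev$ along itself has object manifold $M$ and arrow manifold
\[
 \cB(\cG)\times_{\cG}\cB(\cG)=\bigl\{((\sigma,m),(\tau,m'))\mid m=m',\ \ev(\sigma,m)=\ev(\tau,m')\bigr\}=\bigl\{((\sigma,m),(\tau,m))\mid\sigma(m)=\tau(m)\bigr\},
\]
which is exactly $R$. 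Consequently $R$ is an internal equivalence relation (reflexivity, symmetry and transitivity being immediate from the condition $\sigma(m)=\tau(m)$), it is an embedded Lie subgroupoid of $\cB(\cG)\times\cB(\cG)$ that again belongs to $\cat{LieGroupoids}_M^{\A}$, the projections $\pr_1,\pr_2\from R\to\cB(\cG)$ are morphisms over $\id_M$, and $\ev\circ\pr_1=\ev\circ\pr_2$ by construction. It remains to establish the universal property, and to note that because $R$ has just been recognised as the pullback of $\ev$ along itself the resulting quotient will automatically be effective.

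The crucial analytic input -- and the step I expect to be the main obstacle -- is that $\ev\from\Bis(\cG)\times M\to G$ is a surjective submersion. Surjectivity is precisely the hypothesis that $\cG$ admits a bisection through each arrow: given $g\in G$ set $m=\alpha(g)$ and pick $\sigma$ with $\sigma(m)=g$, so that $\ev(\sigma,m)=g$. For the submersion property I would use the manifold structure of Proposition \ref{prop: A}. As $\Bis(\cG)\se C^{\infty}(M,G)$ is cut out by the closed condition $\alpha\circ\sigma=\id_M$ (the diffeomorphism requirement on $\beta\circ\sigma$ being open), its tangent space is $T_\sigma\Bis(\cG)=\{X\in\Gamma(\sigma^{*}TG)\mid T\alpha\circ X=0\}$, the $\alpha$-vertical variations. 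Writing $V_g=\ker T_g\alpha$, the differential of $\ev$ at $(\sigma,m)$ sends $(X,v)$ to $X(m)+T_m\sigma(v)\in T_{\sigma(m)}G$; here $X\mapsto X(m)$ already surjects onto $V_{\sigma(m)}$, while $T_m\sigma$ is a linear section of the surjection $T_{\sigma(m)}\alpha$ (since $\alpha\circ\sigma=\id_M$) and so maps $T_mM$ isomorphically onto a complement of $V_{\sigma(m)}$. Hence $T_{(\sigma,m)}\ev$ is onto, and $\ev$ is a submersion; this is the one point that genuinely uses the infinite-dimensional manifold structure on $\Bis(\cG)$ and the splitting of $T_{\sigma(m)}\alpha$.

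Granting this, the universal property is formal. Let $\varphi\from\cB(\cG)\to\cH=(H\toto M)$ be a morphism in $\cat{LieGroupoids}_M^{\A}$ with $\varphi\circ\pr_1=\varphi\circ\pr_2$, equivalently $\varphi(\sigma,m)=\varphi(\tau,m)$ whenever $\sigma(m)=\tau(m)$. Then $\varphi$ is constant on the fibres of $\ev$, so there is a unique map of underlying sets $\ol{\varphi}\from G\to H$ with $\varphi=\ol{\varphi}\circ\ev$; since $\ev$ is a submersion it admits smooth local sections $s$, through which $\ol{\varphi}$ is locally given by $\varphi\circ s$ and is therefore smooth. Finally $\ol{\varphi}$ is a morphism of Lie groupoids over $M$: it is the identity on objects, and its compatibility with source, target, multiplication, unit and inversion follows from surjectivity of $\ev$ together with the fact that $\ev$ and $\varphi$ are groupoid morphisms, as any (composable) arrows of $\cG$ lift along $\ev$ to (composable) arrows of $\cB(\cG)$. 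Thus $\ev$ is the coequaliser of $R\toto\cB(\cG)$, i.e.\ the quotient of $\cB(\cG)$ by $R$ in $\cat{LieGroupoids}_M^{\A}$, and by the first paragraph this quotient is effective.
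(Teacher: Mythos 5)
Your overall architecture is sound: identifying $R$ with the kernel pair of $\ev$, reducing everything to $\ev\from \Bis(\cG)\times M\to G$ being a surjective submersion, and then running the formal coequaliser argument (uniqueness of $\ol{\varphi}$ from surjectivity, smoothness from local sections, the groupoid-morphism property from lifting composable arrows) is exactly the right skeleton. Note that the present paper does not reprove this statement at all; it imports it from \cite[Theorem 2.21]{SchmedingWockel15}, and the proof there follows this same pattern, so the comparison below is against that cited argument.

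The genuine gap is precisely the step you single out as the crucial analytic input: you deduce that $\ev$ is a submersion from surjectivity (indeed split surjectivity) of its tangent maps. In the setting of this paper that inference is invalid. Objects of $\cat{LieGroupoids}_M^{\A}$ are locally convex, locally metrisable Lie groupoids, and $\Bis(\cG)$ is modelled on spaces of sections over the compact manifold $M$, hence on Fr\'echet spaces even when $G$ is finite dimensional. Beyond the Banach setting there is no inverse or implicit function theorem, so a pointwise split surjective differential does not imply that $\ev$ is \emph{locally a projection}, which is the definition of submersion used in this paper and exactly what you need twice: once so that the kernel pair $R=\cB(\cG)\times_{\cG}\cB(\cG)$ exists as a Lie subgroupoid in $\cat{LieGroupoids}_M^{\A}$ (pullbacks are only guaranteed along submersions), and once to obtain the smooth local sections through which you factor $\ol{\varphi}$. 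Even if all modelling spaces were Banach, your computation would still need the implicit function theorem on top of it, and the category here is not restricted to Banach groupoids. The correct replacement, and what the proof in \cite{SchmedingWockel15} does, is to construct smooth local sections of $\ev$ explicitly: given $g_0=\sigma_0(\alpha(g_0))$ for some bisection $\sigma_0$, one uses the adapted local addition and a cut-off on the finite-dimensional base $M$ to deform $\sigma_0$ into a bisection $\sigma_g$ through any arrow $g$ near $g_0$, depending smoothly on $g$; then $g\mapsto(\sigma_g,\alpha(g))$ is a smooth local section of $\ev$, which makes $\ev$ a surjective submersion. With that substitution, the rest of your argument (kernel-pair identification, universal property, and the effectiveness remark) goes through as written.
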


\section{Transitive Pairs and the construction functor}\label{sect: TP:con}

In this section we define categories of Lie groups with transitive actions and functors between these categories and categories of Lie groupoids over $M$.
As always, $M$ will be a fixed compact manifold and we shall consider only Lie groupoids in $\cat{LieGroupoids}_M^{\A}$.

\begin{definition}[Transitive pair]\label{defn:
 tgpair}Choose and fix once and for all a point
 $m \in M$.  Let $\theta \colon K \times M \rightarrow M$ be a transitive
 (left-)Lie group action of a Lie group $K$ modelled on a metrisable space and $H$
 be a subgroup of $K$.
 
 Then we call $(\theta , H)$ a \emph{transitive pair} (over $M$ with base point
 $m$) if the following conditions are satisfied:
 \begin{enumerate}[label=({P\arabic*})]
  \item \label{defn: tgpair_1} the map $\theta_m \coloneq \theta (\cdot,m)$ is
        a surjective submersion,
  \item \label{defn: tgpair_2} $H$ is a normal Lie subgroup of the stabiliser
        $\Stab{m}$ of $m$ and this structure turns $H$ into a regular Lie group
        which is co-Banach as a submanifold in $\Stab{m}$.
 \end{enumerate}
 The largest subgroup of $H$ which is a normal subgroup of $K$ is called \emph{kernel} of the transitive pair.\footnote{Note that by \cite[Proposition 4.16]{SchmedingWockel15} every transitive pair admits a kernel (cf.\ the comments before Corollary \ref{cor: ker:cat}). By standard arguments for topological groups, the kernel is a closed subgroup. In general this will not entail that it is a closed Lie subgroup (of the infinite-dimensional Lie group $K$).}
 If the action of $K$ on $M$ is also $n$-fold transitive, then we call
 $(\theta,H)$ an \emph{$n$-fold transitive pair}.
\end{definition}

Transitive pairs have been studied in \cite{SchmedingWockel15} in the context of reconstructions of locally trivial Lie groupoids. 
Conceptually they are closely related to Klein geometries and we refer to loc.cit. for more information on this topic.
In the context of the present paper, we reconsider the construction of a locally trivial Lie groupoid from a transitive pair.
It will turn out that this constructions is functorial on a suitable category of transitive pairs which we introduce now.

\begin{definition}[Category of transitive pairs] 
We define the category $\cat{TransPairs}_{M}$ which has as objects transitive pairs (over $M$ with
 base-point $m$). 
 
 A morphism in $\cat{TransPairs}_{M}$ from
 $(\theta\from K\times M\to M,H)$ to $(\theta'\from K'\times M\to M,H')$ is a
 smooth morphisms of Lie groups $\varphi\from K\to K'$ such that
 $\varphi(H)\se H'$ and $\theta'^\wedge \circ \varphi= \theta^\wedge $. 
 Here $\theta^\wedge \colon K \rightarrow \Diff (m) ,\quad k \mapsto \theta (k,\cdot)$ is the Lie group morphism associated to $\theta$ via the exponential law (see \cite[Theorem
  7.6]{Wockel13Infinite-dimensional-and-higher-structures-in-differential-geometry}).
 \end{definition}

  \begin{remark}\label{rem: forgetful}
   By construction, there is a forgetful functor $\op{For} \colon \cat{TransPairs}_M \rightarrow \cat{LieGroups}_{\Diff (M)}$ which sends a transitive pair $(\theta \colon K \times M \rightarrow M,H)$ to $\theta^\wedge \colon K \rightarrow \Diff (M)$ and a morphism $\varphi \colon (\theta, H) \rightarrow (\theta',H')$ to the underlying Lie group morphism.
  \end{remark}

 \begin{lemma}\label{lem: iso:char}
  A morphism $\varphi \colon (\theta \colon K \times M \rightarrow M, H) \rightarrow (\theta' \colon K' \times M \rightarrow M,H')$ in $\cat{TransPairs}_M$ is an isomorphism if and only if the underlying morphism of Lie groups $\varphi \colon K \rightarrow K'$ is an isomorphism whose inverse maps $H'$ into $H$.
 \end{lemma}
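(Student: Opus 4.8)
The plan is to unwind what it means for $\varphi$ to be an isomorphism in $\cat{TransPairs}_{M}$ and to compare it directly with the definition of a morphism there. Recall that a morphism $(\theta,H)\to(\theta',H')$ is a smooth Lie group morphism $\varphi\from K\to K'$ subject to the two constraints $\varphi(H)\se H'$ and $\theta'^\wedge\circ\varphi=\theta^\wedge$; in particular, composition in $\cat{TransPairs}_{M}$ is computed on the underlying Lie group morphisms, so the assignment sending a morphism of transitive pairs to its underlying Lie group morphism is faithful. An isomorphism is then a morphism $\varphi$ admitting a two-sided inverse $\psi\from(\theta',H')\to(\theta,H)$ in $\cat{TransPairs}_{M}$, and I will show that such a $\psi$ exists exactly under the stated condition.

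For the forward direction, suppose $\varphi$ is an isomorphism with inverse $\psi$. Since composition is computed on underlying Lie group morphisms, the identities $\psi\circ\varphi=\id_{K}$ and $\varphi\circ\psi=\id_{K'}$ already hold at the level of Lie groups, so the underlying map $\varphi\from K\to K'$ is an isomorphism of Lie groups with $\varphi^{-1}=\psi$. Because $\psi$ is itself a morphism in $\cat{TransPairs}_{M}$, it satisfies $\psi(H')\se H$, which is precisely the assertion that $\varphi^{-1}$ maps $H'$ into $H$.

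For the converse, assume the underlying $\varphi\from K\to K'$ is a Lie group isomorphism whose inverse $\psi\coloneq\varphi^{-1}$ satisfies $\psi(H')\se H$. I claim $\psi$ is a morphism of transitive pairs and hence a categorical inverse to $\varphi$. As the inverse of a Lie group isomorphism, $\psi$ is automatically a smooth group homomorphism, so it remains only to verify the two defining conditions. The subgroup condition $\psi(H')\se H$ is the hypothesis. The compatibility condition $\theta^\wedge\circ\psi=\theta'^\wedge$ is automatic: composing the relation $\theta'^\wedge\circ\varphi=\theta^\wedge$ (valid since $\varphi$ is a morphism) on the right with $\psi$ gives $\theta'^\wedge\circ\varphi\circ\psi=\theta^\wedge\circ\psi$, and since $\varphi\circ\psi=\id_{K'}$ the left-hand side simplifies to $\theta'^\wedge$. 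Thus $\psi$ is a morphism $(\theta',H')\to(\theta,H)$, and as $\psi$ and $\varphi$ are mutually inverse at the level of Lie groups they are mutually inverse in $\cat{TransPairs}_{M}$.

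I expect essentially no obstacle beyond bookkeeping; the only genuinely non-formal point is that a morphism merely requires $\varphi(H)\se H'$, whereas its inverse being a morphism requires the a priori independent condition $\varphi^{-1}(H')\se H$. This is exactly why one cannot read off invertibility in $\cat{TransPairs}_{M}$ from invertibility of the underlying Lie group morphism alone, and it is the content encoded by the extra hypothesis on the inverse. Smoothness of $\psi=\varphi^{-1}$ is guaranteed by the convention that an isomorphism of Lie groups has smooth inverse, so no further analytic input is needed.
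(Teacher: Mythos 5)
Your proof is correct and follows essentially the same route as the paper's: the forward direction is the formal observation that the inverse morphism in $\cat{TransPairs}_M$ supplies both the Lie group inverse and the condition $\psi(H')\se H$, and the converse rests on the identical computation $\theta^\wedge\circ\psi=\theta'^\wedge\circ\varphi\circ\psi=\theta'^\wedge$. The only difference is that you spell out the "clearly necessary" direction, which the paper dispatches in one sentence.
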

 
 \begin{proof}
  The condition is clearly necessary. 
  Conversely, assume that $\varphi \colon K \rightarrow K'$ is an isomorphism of Lie groups with inverse $\psi \colon K' \rightarrow K$.
  Now $\psi$ maps $H'$ into $H$ and it induces a morphism of transitive pairs since 
  \begin{displaymath}
   \theta^\wedge \circ \psi = \theta'^\wedge \circ \varphi \circ \psi = \theta'^\wedge.
  \end{displaymath}
 \end{proof}

Let us now exhibit two examples of transitive pairs.
To this end recall the notion of a locally trivial Lie groupoid

\begin{definition}
 Let $\cG = (G \toto M)$ be a Lie groupoid. Then we call $\cG$ 
 \emph{locally trivial} if the \emph{anchor map} $(\beta,\alpha) \colon G \rightarrow M \times M, g \mapsto (\beta (g) , \alpha (g))$ is a surjective submersion. 
\end{definition}
 
\begin{example}\label{ex: tgp:bis} 
  \begin{enumerate}
   \item Let $\cG = (G \toto M)$ be a locally trivial Banach-Lie groupoid over a compact manifold $M$.
      Denote by $\Vtx{m}$ the vertex subgroup of the groupoid $\cG$. 
   By \cite[Proposition 3.12]{SchmedingWockel14} $\cG$ admits an adapted local addition, whence $\Bis (\cG)$ becomes a Lie group and \cite[Proposition 3.2 and Proposition 3.4]{SchmedingWockel15} shows that
  \begin{align*}
   \Loop{m} (\cG) &\coloneq \{\sigma \in \Bis (\cG) \mid \sigma (m) \in \Vtx{m} = \alpha^{-1} (m) \cap \beta^{-1} (m)\} ,\\
   \Bisf{m}(\cG) &\coloneq \{\sigma \in \Bis (\cG) \mid \sigma (m) = 1_m\}.
  \end{align*} 
  are regular Lie subgroups of $\Bis (\cG)$ such that $\Bisf{m} (\cG)$ is a normal co-Banach Lie subgroup of $\Loop{m} (\cG)$.
 Note that $\beta \circ \ev \colon \Bis (\cG) \times M \rightarrow M$ is a Lie group action whose $m$-stabiliser is $\Loop{m} (\cG)$.
 Recall from \cite[Example 4.2 a)]{SchmedingWockel15} that the pair $(\beta \circ \ev , \Bisf{m} (\cG))$ is a transitive pair, if $\beta \circ \ev$ is a transitive Lie group action. 
 In particular, $(\beta \circ \ev , \Bisf{m}(\cG))$ is a transitive pair if $M$ is a connected manifold (see Lemma \ref{lem: always:trans}) or for each $g \in G$ there is a bisection $\sigma_g$ with $\sigma_g (\alpha (g)) = g$.\
 \end{enumerate}
The first example motivated the definition of a transitive  pair and is closely tied to the quotient process outlined in the previous section. 
However, one has considerable freedom in choosing the ingredients for such a pair:
 \begin{enumerate}
   \item[b)]  Consider the diffeomorphism group $\Diff (M)$ of a compact and connected manifold $M$.
   Choose a Lie group $B$ modelled on a Banach space and define $K \coloneq \Diff (M) \times B$. 
   Then $K$ becomes a Lie group which acts transitively via $\theta \colon K \times M \rightarrow M , ((\varphi , b),m) \mapsto \varphi (m)$.
   Fix $m \in M$ and observe that $\theta_m$ is a submersion as $\theta_m = \ev_m \circ \pr_1$ and $\ev_m \colon \Diff (M) \rightarrow M$ is a submersion. 
   By construction $\Stab{m} = \Diff_m (M) \times B$ and $H \coloneq K_m$ is a regular (and normal) Lie subgroup of $K_m$.\footnote{Here we have used that $\Diff (M) = \Bis (\cP (M))$ is regular and $\Bisf{m}(\cP(M)) = \Diff_m (M)$. Moreover, $B$ is regular as a Banach Lie group and $\Lf (\Diff (M) \times B) \cong \Lf (\Diff (M)) \times \Lf (B)$.}
   We conclude that $(\ev_m \circ \pr_1 , \Diff_m (M) \times B)$ is a transitive  pair.
   \item[c)] Consider a transitive action $K \times M \rightarrow M$ of a finite-dimensional Lie group on a compact manifold. 
   Then by \cite[Remark 4.3]{SchmedingWockel15} a transitive pair $(\theta, H)$ is given by any normal subgroup $H$ of $\Stab{m}$. 
   As a special case consider the canonical action $SO(3) \times S^2 \rightarrow S^2$ of the special orthogonal group $SO(3)$ on the $2$-sphere (canonically embedded in $\R^{3}$). 
   This action is transitive with abelian stabiliser $SO(2) \cong S^1$.
   Hence, we can choose as $H$ any closed subgroup of $S^1$. In particular. choose as $H$ either $S^1$ or the cyclic subgroups generated by an element with $x^n = 1$ for some $n \in \N$. 
  \end{enumerate}
\end{example}

 Our goal is now to obtain a functor which associates to a transitive pair a locally trivial Lie groupoid. 
 To this end, we have to recall some results from \cite[Section 4]{SchmedingWockel15}.
 
\begin{Proposition}\label{prop: const:pb-R}
 Let $(\theta , H)$ be a transitive  pair. 
  \begin{enumerate}
   \item Then the quotients $K/H$ and $\Lambda_m \coloneq \Stab{m}/H$ are Banach manifolds (such that the quotient maps become submersions). 
 Moreover, the map $\theta_m$ induces a $\Lambda_m$-principal bundle $\pi \colon K/H \rightarrow M, kH \mapsto \theta (k,m)$.
   \item We denote the Gauge groupoid associated to the $\Lambda_m$-principal bundle by  
 \begin{displaymath}
  \cR(\theta, H) \coloneq \left(\vcenter{ \xymatrix{ \frac{K/H \times
  K/H}{\Lambda_m} \ar@<1.2ex>[d]^{\beta _{\cR}}\ar@<-1ex>[d]_{\alpha_{\cR}} \\
  M. }}\right).
 \end{displaymath}
 Its structure maps are given by $\alpha_{\cR} (\langle gH, kH\rangle) = \pi (kH)$ and
 $\beta_{\cR} (\langle gH, kH\rangle) = \pi (gH)$. 
  \end{enumerate}
\end{Proposition}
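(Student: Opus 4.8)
The plan is to read the entire statement off the principal-bundle structure that the stabiliser $\Stab{m}$ and its normal subgroup $H$ impose on $K$; the two quotient constructions themselves are those recalled from \cite[Section 4]{SchmedingWockel15}, and part b) is then the standard gauge groupoid of a principal bundle.

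First I would exploit condition \ref{defn: tgpair_1}. Since $\theta_m\from K\to M$ is a surjective submersion, its fibre $\theta_m^{-1}(m)=\Stab{m}$ is a closed split submanifold of $K$, while the remaining fibres are exactly the left cosets $k\,\Stab{m}$. Hence $\Stab{m}$ is a Lie subgroup of $K$, the right multiplication action of $\Stab{m}$ on $K$ is free with orbits the fibres of $\theta_m$, and choosing local smooth sections of $\theta_m$ trivialises $K$ and exhibits $\theta_m\from K\to M$ as a principal $\Stab{m}$-bundle. On modelling spaces this yields a splitting $\mf{k}=\mf{k}_m\oplus\mf{c}$ with $\mf{c}\cong T_mM$ finite-dimensional, where $\mf{k}_m$ denotes the Lie algebra of $\Stab{m}$.

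Next I would form the two quotients. By \ref{defn: tgpair_2} the subgroup $H$ is normal, regular and co-Banach in $\Stab{m}$, so its Lie algebra $\mf{h}$ has a closed Banach complement $\mf{q}$ with $\mf{k}_m=\mf{h}\oplus\mf{q}$, and the quotient $\Lambda_m=\Stab{m}/H$ exists as a Banach-Lie group modelled on $\mf{q}$ with $\Stab{m}\to\Lambda_m$ a submersion (the quotient construction of \cite[Section 4]{SchmedingWockel15}). For $K/H$ the decisive point is that $H$ is normal \emph{in $\Stab{m}$}: the right $H$-action on $K$ is then the restriction of the free $\Stab{m}$-action, and its orbits sit inside the fibres of $\theta_m$. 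Over a trivialising open set $U\se M$ we have $\theta_m^{-1}(U)\cong U\times\Stab{m}$, and dividing by $H$ gives $U\times\Lambda_m$; normality makes the $\Stab{m}$-valued transition cocycle descend to a smooth $\Lambda_m$-valued one, so these charts glue to a manifold $K/H$ with $K\to K/H$ a submersion. Combining the two splittings gives $\mf{k}/\mf{h}\cong\mf{c}\oplus\mf{q}$, which is Banach since $\mf{c}$ is finite-dimensional and $\mf{q}$ is Banach — this is what makes $K/H$ a Banach manifold even though $K$ is only metrisable. The induced map $\pi\from K/H\to M$, $kH\mapsto\theta(k,m)$, is well defined because $H\se\Stab{m}$ fixes $m$, and over each $U$ it is the projection $U\times\Lambda_m\to U$ carrying the free, fibre-transitive residual right $\Lambda_m$-action; hence $\pi$ is a $\Lambda_m$-principal bundle, which proves part a). For part b) it then only remains to invoke the standard gauge (Atiyah) groupoid \cite[Ch.~1]{Mackenzie05General-theory-of-Lie-groupoids-and-Lie-algebroids}: for the Banach principal $\Lambda_m$-bundle $\pi\from K/H\to M$ the quotient $(K/H\times K/H)/\Lambda_m$ by the diagonal $\Lambda_m$-action is a locally trivial Banach-Lie groupoid over $M$ whose source and target are induced by the two projections, giving exactly the maps $\alpha_{\cR}$ and $\beta_{\cR}$.

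The step I expect to be the main obstacle is constructing $K/H$ as a Banach manifold: unlike $\Lambda_m$, one here divides a possibly non-Banach group, so one must check that the local models $U\times\Lambda_m$ are genuinely compatible — that is, that the $\Stab{m}$-valued transition functions of $\theta_m$ push forward to smooth $\Lambda_m$-valued ones. This is precisely where the normality of $H$ \emph{in $\Stab{m}$} (rather than merely in $K$) is needed, and where the co-Banach hypothesis, together with the finite codimension of $\Stab{m}$ in $K$ supplied by \ref{defn: tgpair_1}, guarantees a Banach model space $\mf{c}\oplus\mf{q}$.
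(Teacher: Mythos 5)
Your proposal is correct, and it is essentially the argument behind the result as the paper uses it: note that the paper gives no proof of this Proposition at all --- it is stated (with the end-of-statement box) as a result recalled from \cite[Section 4]{SchmedingWockel15} --- and your construction (local sections of the submersion $\theta_m$ exhibiting $K\to M$ as a principal $\Stab{m}$-bundle, descent of the local trivialisations to $K/H$, then the gauge groupoid of the resulting Banach principal bundle) is the one carried out in that reference. Two points of precision are worth recording. First, the Banach--Lie group structure on $\Lambda_m=\Stab{m}/H$ and the smooth local sections of $\Stab{m}\to\Lambda_m$ do \emph{not} follow from the splitting $\mf{k}_m=\mf{h}\oplus\mf{q}$; in infinite dimensions a closed, complemented, normal Lie subalgebra need not yield a Lie group quotient at all. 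This is exactly why \ref{defn: tgpair_2} requires $H$ to be \emph{regular}: the quotient construction you cite rests on Gl{\"o}ckner's results (cf.\ \cite{HGRegLie15}) guaranteeing that a regular, normal, co-Banach Lie subgroup admits a Banach--Lie quotient group with submersive, locally trivial projection. Your ``so'' makes it read as though the algebraic splitting does this work; it does not. Second, a minor one: left translations always descend to left coset spaces, so normality is not what makes the transition cocycle descend. Normality of $H$ in $\Stab{m}$ is what makes $\Lambda_m$ a group and the right action $(kH)\cdot(hH)=khH$ on $K/H$ well defined (one needs $h^{-1}\eta h\in H$ for $\eta\in H$ and $h\in\Stab{m}$), which is what the principal-bundle structure genuinely requires. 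Neither point is a gap in the sense of invalidating your argument, since you, like the paper, defer to \cite{SchmedingWockel15} for the quotient construction, but they locate where the hypotheses of Definition \ref{defn: tgpair} actually enter.
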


 \begin{remark}\label{rem: Liegp:loctgpd} Let $(\theta, H)$ be a transitive pair with $\theta \colon K \times M \rightarrow M$.
 \begin{enumerate}
  \item Observe that as $K/H$ is a Banach manifold, the gauge groupoid $\cR(\theta,H)$ is a Banach-Lie groupoid
 and thus $\cR(\theta, H) \in \cat{LieGroupoids}_{M}^{\A}$ by \cite[Proposition
 3.12]{SchmedingWockel14}.
 Moreover, the Gauge groupoid $\cR (\theta, H)$ is source connected if and only if $K/H$ is connected.
  \item Cȟoose a section atlas
 $(\sigma_i,U_i)_{i \in I}$ of $\theta_m \colon K \xrightarrow{\Stab{m}} M$, i.e.\ a family of sections of $\theta_m$ such that $M = \bigcup_{i \in I} U_i$ (since $\theta_m$ is a submersion such an atlas exists). 
 Composing the $\sigma_i$ with the quotient map $p \colon K \rightarrow K/H$, we obtain a section atlas
 $s_i \coloneq p \circ \sigma_i \colon U_i \rightarrow \pi^{-1} (U_i) \subseteq K/H$
 of $\pi \colon K/H \xrightarrow{\Lambda_m} M$ and thus identify the bisections of $\cR (\theta,H)$ with bundle automorphisms
 via 
 \begin{equation}\label{eq: ident:AutBis}
  \Aut (\pi \colon K/H \rightarrow M) \rightarrow \Bis (\cR (\theta,H)),\quad f \mapsto (m \mapsto \langle f(s_i (m)) , s_i (m)\rangle , \text{ if } m\in U_i.
 \end{equation}
  (cf.\ \cite[Example 3.16]{SchmedingWockel14})
 \item Recall that the sections described in b) also induce manifold charts for $\frac{K/H \times K/H}{\Lambda_m}$ via
 \begin{equation}\label{eq: Gau:triv}
  \frac{\pi^{-1} (U_i) \times \pi^{-1} (U_j)}{\Lambda_m} \rightarrow U_i \times U_j \times \Lambda_m ,\quad \langle p_1 , p_2\rangle \mapsto (\pi(p_1) , \pi (p_2), \delta (s_i (\pi (p_1)), p_1) \delta (s_j (\pi(p_2),p_2)^{-1}).
 \end{equation}
 where $\delta \colon K/H \times_\pi K/H \rightarrow \Lambda_m$ is the smooth
 map mapping a pair $(p,q)$ to the element $p^{-1} \cdot q \in \Lambda_m$ which
 maps $p$ to $q$ (via the $\Lambda_m$-right action). 
\end{enumerate}
\end{remark}

 In Section \ref{sec:locally_convex_lie_groupoids_and_lie_groups} we have seen that the category $\catBLie$ of all Banach-Lie groupoids over $M$ is contained in $\cat{LieGroupoids}_M^{\A}$.
 Let us define now now a suitable subcategory of $\catBLie$ together with a functor.
 
 \begin{definition}\label{defn: funct:R}
 \begin{enumerate}
  \item Define the full subcategory $\catltBLie$ of $\catBLie$ whose objects are 
  \begin{align*}
  \op{Ob} \catltBLie &\coloneq \text{locally trivial Banach-Lie groupoids over } M.
  \end{align*}
  \item  Define a functor  
  \begin{align*}
   \cR\from \cat{TransPairs}_{M}\to \catltBLie \subseteq \cat{LieGroupoids}^{\A}_{M}
  \end{align*}
 which maps $(\theta, H)$ to the locally trivial Lie groupoid $\cR (\theta,H)$ (cf.\  Proposition \ref{prop: const:pb-R}) and a morphism $\varphi \colon (\theta,H) \rightarrow (\theta',H')$ in $\cat{TransPairs}_{M}$ to $\cR (\varphi) \colon \cR (\theta,H) \rightarrow \cR (\theta',H'), \langle  kH , gH\rangle \mapsto \langle \varphi(k)H', \varphi (g)H'\rangle$. 
 \end{enumerate}
\end{definition}

 The functor $\cR$ constructs Lie groupoids from transitive pairs. 
 These Lie groupoids are intimately connected to the transitive action of the transitive pair on $M$.
 To see this we recall some results on a natural Lie group morphism induced by first applying $\cR$ and then the bisection functor.
 
 \begin{lemma}[{{see \cite[Lemma 4.11]{SchmedingWockel15}}}]\label{lem:canonical_morphism_into_bisections}
 Let $(\theta, H)$ be a transitive pair. Then the action of $K$ on $K/H$ by
 left multiplication gives rise to a group homomorphsim
 $K \rightarrow \Aut (\pi \colon K/H \xrightarrow{\Lambda_m} M)$. With respect
 to the canonical isomorphism
 $ \Aut (\pi \colon K/H \xrightarrow{\Lambda_m} M) \cong \Bis (\cR (\theta,H))$
 of Lie groups from \eqref{eq: ident:AutBis} this gives rise to the group
 homomorphism
 \begin{equation*} 
  a_{\theta,H} \colon K \rightarrow \Bis (\cR (\theta,H)) ,\quad k \mapsto (x \mapsto \langle k \cdot s_i (x) , s_i (x) \rangle , \text{ for } x \in U_i),
 \end{equation*}
 where $s_i = p_m \circ \sigma_i, i\in I$ are the sections from \ref{rem:
 Liegp:loctgpd} b). Moreover, $a_{\theta,H}$ is smooth and makes the diagram 
 \begin{displaymath}
      \begin{xy}
      \xymatrix{
	K \ar[rr]^{a_{\theta, H}} \ar[rrd]^{\theta^\wedge} & & \Bis (\cR (\theta , H)) \ar[d]^{(\beta_\cR)_*} \\
	& & \Diff (M)
  }
\end{xy}
    \end{displaymath}
 commutative.
\end{lemma}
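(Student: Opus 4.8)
The plan is to work first entirely at the level of the $\Lambda_m$-principal bundle $\pi \colon K/H \to M$ from Proposition~\ref{prop: const:pb-R} and to transport the statement to bisections only at the end, via the Lie group isomorphism \eqref{eq: ident:AutBis}. First I would record the right $\Lambda_m$-action on $K/H$: for $gH \in K/H$ and $sH \in \Lambda_m = \Stab{m}/H$ set $(gH)\cdot(sH) \coloneq gsH$, which is well defined precisely because $H$ is normal in $\Stab{m}$, and along which $\pi$ is constant since $s \in \Stab{m}$. The left translation $L_k \colon K/H \to K/H$, $gH \mapsto kgH$, is then $\Lambda_m$-equivariant, $L_k((gH)\cdot(sH)) = kgsH = (L_k(gH))\cdot(sH)$, and it covers $\theta^\wedge(k)$ on $M$, because $\pi(L_k(gH)) = \theta(kg,m) = \theta(k,\theta(g,m)) = \theta^\wedge(k)(\pi(gH))$. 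Hence $L_k \in \Aut(\pi \colon K/H \xrightarrow{\Lambda_m} M)$, and $k \mapsto L_k$ is a group homomorphism since $L_{kk'} = L_k \circ L_{k'}$.

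Next I would apply the Lie group isomorphism \eqref{eq: ident:AutBis} to $L_k$. By its defining formula it sends $L_k$ to the bisection $x \mapsto \langle L_k(s_i(x)), s_i(x)\rangle = \langle k\cdot s_i(x), s_i(x)\rangle$ for $x \in U_i$, which is exactly $a_{\theta,H}(k)$; since both $k\mapsto L_k$ and \eqref{eq: ident:AutBis} are homomorphisms, $a_{\theta,H}$ is automatically one. Independence of the chart index is then a short check: on $U_i \cap U_j$ one has $s_j(x) = s_i(x)\cdot\lambda$ for some $\lambda \in \Lambda_m$, and $\langle k\cdot(s_i(x)\lambda), s_i(x)\lambda\rangle = \langle k\cdot s_i(x), s_i(x)\rangle$ because the gauge groupoid is the quotient by the diagonal $\Lambda_m$-action.

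For smoothness I would use that $\Bis(\cR(\theta,H))$ is a submanifold of $C^\infty(M, \tfrac{K/H \times K/H}{\Lambda_m})$ (Proposition~\ref{prop: A}), so that it suffices to prove $a_{\theta,H}$ smooth as a map into the ambient mapping space. By the exponential law this is equivalent to smoothness of the adjoint $K \times M \to \tfrac{K/H \times K/H}{\Lambda_m}$, $(k,x) \mapsto \langle k\cdot s_i(x), s_i(x)\rangle$. On each $U_i$ this is a composite of the smooth $K$-action on $K/H$, the smooth sections $s_i$ from Remark~\ref{rem: Liegp:loctgpd}, and the smooth gauge quotient map, hence smooth; the overlap identity above glues these local expressions to a globally smooth map. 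This exponential-law step, together with keeping track of the submanifold structure of $\Bis$, is the part I expect to be the main technical obstacle, since everything else is purely algebraic.

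Finally, commutativity of the diagram is a direct computation: for $x \in U_i$,
\begin{equation*}
 (\beta_\cR)_*(a_{\theta,H}(k))(x) = \beta_\cR\big(\langle k\cdot s_i(x), s_i(x)\rangle\big) = \pi(k\cdot s_i(x)) = \theta(k,\pi(s_i(x))) = \theta(k,x) = \theta^\wedge(k)(x),
\end{equation*}
using $\pi \circ s_i = \id_{U_i}$ and the covering property of $L_k$ established in the first step. This yields $(\beta_\cR)_* \circ a_{\theta,H} = \theta^\wedge$, as claimed.
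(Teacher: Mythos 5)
Your proposal is correct. Note that this paper does not actually prove the lemma---it is imported verbatim from \cite[Lemma 4.11]{SchmedingWockel15}---but your argument follows exactly the route the statement itself prescribes: checking that the left translations $L_k$ are $\Lambda_m$-equivariant bundle automorphisms of $\pi \colon K/H \to M$ covering $\theta^\wedge(k)$, transporting along the isomorphism \eqref{eq: ident:AutBis}, obtaining smoothness from the exponential law (valid here since $M$ is compact and the arrow manifold of $\cR(\theta,H)$ is a Banach manifold admitting a local addition, with $\Bis(\cR(\theta,H))$ a submanifold of the mapping space by Proposition \ref{prop: A}), and verifying the triangle by the direct computation $\beta_\cR(\langle k\cdot s_i(x), s_i(x)\rangle) = \pi(k \cdot s_i(x)) = \theta^\wedge(k)(x)$. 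All the individual verifications (well-definedness of the right $\Lambda_m$-action from normality of $H$ in $\Stab{m}$, chart-independence via the diagonal quotient, and the gluing of the local smooth expressions) are sound.
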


In general, the Lie group morphism $a_{\theta,H}$ will neither be injective nor surjective (this reflects that the notion of a transitive pairs is quite flexible).
However, one may understand the construction of $a_{\theta,H}$ as a way to obtain an interesting Lie group morphism from a transitive pair into the bisections of suitable locally trivial Lie groupoids over $M$.
Moreover, under some assumptions on the transitive pair, the Lie group morphism $a_{\theta,H}$ lifts to a morphism of transitive pairs. 

\begin{lemma} \label{lem:canonical_morphisms_commute}
 Let $(\theta, H)$ be a transitive pair such that also $(\beta_\cR \circ \ev , \Bisf{m} (\cR (\theta,H)))$ is a transitive pair.
 Then  $a_{\theta,H}$ induces a morphism of transitive pairs $(\theta , H) \rightarrow (\beta_\cR \circ \ev , \Bisf{m} (\cR (\theta,H))$. 
\end{lemma}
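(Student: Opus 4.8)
The plan is to verify directly that the smooth group homomorphism $a_{\theta,H} \colon K \to \Bis(\cR(\theta,H))$ from Lemma \ref{lem:canonical_morphism_into_bisections} satisfies the two defining conditions of a morphism of transitive pairs. Recall that a morphism $(\theta, H) \to (\theta', H')$ is a morphism of Lie groups $\varphi$ with $\varphi(H) \subseteq H'$ and $\theta'^\wedge \circ \varphi = \theta^\wedge$. Here the target pair is $(\beta_\cR \circ \ev, \Bisf{m}(\cR(\theta,H)))$, which is a transitive pair by hypothesis (so that the target is genuinely an object of $\cat{TransPairs}_M$ and the statement is well-posed). Since $a_{\theta,H}$ is already known to be a smooth morphism of Lie groups, it suffices to check the image condition and the compatibility with the anchor actions.

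For the compatibility condition, I would first observe that the adjoint morphism $(\beta_\cR \circ \ev)^\wedge \colon \Bis(\cR(\theta,H)) \to \Diff(M)$ sends $\sigma \mapsto (x \mapsto \beta_\cR(\sigma(x)))$, which is precisely the map $(\beta_\cR)_* \colon \sigma \mapsto \beta_\cR \circ \sigma$. Hence the required identity $(\beta_\cR \circ \ev)^\wedge \circ a_{\theta,H} = \theta^\wedge$ is exactly the commutativity of the triangle already established in Lemma \ref{lem:canonical_morphism_into_bisections}, and nothing further is needed for this half.

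The substantive point is the image condition $a_{\theta,H}(H) \subseteq \Bisf{m}(\cR(\theta,H))$, i.e.\ that $a_{\theta,H}(k)(m) = 1_m$ for every $k \in H$. Choosing an index $i$ with $m \in U_i$, the explicit formula gives $a_{\theta,H}(k)(m) = \langle k \cdot s_i(m), s_i(m) \rangle$, while the identity arrow at $m$ in the gauge groupoid is $1_m = \langle s_i(m), s_i(m) \rangle$. Since these two classes in $\frac{K/H \times K/H}{\Lambda_m}$ have equal second entries, it suffices to prove $k \cdot s_i(m) = s_i(m)$ in $K/H$. Writing $s_i(m) = \sigma_i(m) H$ with $\sigma_i(m) \in \Stab{m}$ (as $\sigma_i$ is a section of $\theta_m$), this amounts to $\sigma_i(m)^{-1} k \sigma_i(m) \in H$, which holds because $H$ is normal in $\Stab{m}$ while $k \in H$ and $\sigma_i(m) \in \Stab{m}$.

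I expect the only real obstacle to be the bookkeeping of the conventions in the gauge groupoid: one must pin down the identity arrows as classes $\langle p, p \rangle$ with $p \in \pi^{-1}(m)$, recall that the right $\Lambda_m$-action is $gH \cdot sH = gsH$ for $s \in \Stab{m}$ (which is well-defined precisely by normality of $H$ in $\Stab{m}$), and keep in mind that the value $a_{\theta,H}(k)(m)$ is independent of the chosen section $\sigma_i$ since $a_{\theta,H}$ is already a well-defined map by Lemma \ref{lem:canonical_morphism_into_bisections}. Once these conventions are fixed, the normality of $H$ in $\Stab{m}$ delivers the claim at once.
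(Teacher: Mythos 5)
Your proof is correct and takes essentially the same route as the paper: both reduce, via Lemma \ref{lem:canonical_morphism_into_bisections}, to checking $a_{\theta,H}(H) \subseteq \Bisf{m}(\cR(\theta,H))$, compute $a_{\theta,H}(h)(m) = \langle h\, s_i(m), s_i(m)\rangle$, and exploit that $s_i(m)$ lies in $\Stab{m}/H$ together with normality of $H$ in $\Stab{m}$. The only (cosmetic) difference is in the last step: the paper translates the class by the right $\Lambda_m$-action to get $\langle hH, 1_K H\rangle = 1_{\cR(\theta,H)}(m)$, while you compare first entries in $K/H$ directly via the conjugation $\sigma_i(m)^{-1} h\, \sigma_i(m) \in H$.
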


\begin{proof} 
 In view of Lemma \ref{lem:canonical_morphism_into_bisections} we only have to prove that for $h \in H$ we have $a_{\theta , H} (h) \in \Bisf{m}(\cR(\theta,H))$. 
 By definition of $a_{\theta,H}$, we have $a_{\theta,H} (h)(m) = \langle h s_i (m) , s_i(m)\rangle$ for the sections $s_i \colon U_i \rightarrow K /H$ of $\pi \colon K/H \rightarrow M$ discussed in Remark \ref{rem: Liegp:loctgpd} b).
 Recall that $\pi$ is induced by the action $\theta$. 
 Thus $m = \pi (s_i (m)) = \theta (s_i (m),m)$ holds and $s_i (m) =k_mH \in \Stab{m} / H $.
 Using the $\Stab{m}/H$-principal bundle structure, the image in the gauge groupoid becomes $a_{\theta,H} (h)(m) = \langle h H , 1_K H\rangle = \langle 1_K H , 1_K H\rangle = 1_{\cR (\theta ,H)} (m)$.
 Summing up, $a_{\theta,H} (H) \subseteq \Bisf{m} (\cR (\theta,H))$ and thus $a_{\theta,H}$ is a morphism of transitive pairs.
\end{proof}

\begin{proposition}\label{prop: tech:atha}
Consider a morphism $\varphi \colon (\theta,H) \rightarrow (\tilde{\theta}, \tilde{H})$ in $\cat{TransPairs}_{M}$.
Then for $a_{\theta,H}$ as in Lemma \ref{lem:canonical_morphism_into_bisections}, the following diagram in $\cat{LieGroups}_{\Diff (M)}$ commutes 
         \begin{displaymath}
         \begin{xy}
      \xymatrix{
	K \ar[rr]^-{a_{\theta, H}} \ar[d]^{\varphi} & & \Bis (\cR (\theta , H)) \ar[d]^{\Bis \circ \cR (\varphi) } \\
	\tilde{K} \ar[rr]^-{a_{\tilde{\theta}, \tilde{H}}} & & \Bis (\cR (\tilde{\theta} , \tilde{H}))
  }
 \end{xy}                                                                                                                                               
    \end{displaymath}
    Hence the family $(a_{\theta , H})_{(\theta, H) \in \cat{TransPairs}_M}$ defines a natural transformation $\op{For} \Rightarrow \Bis \circ \cR$ in $\cat{LieGroups}_{\Diff (M)}$ (where $\op{For}$ is the forgetful functor from Remark \ref{rem: forgetful}).
\end{proposition}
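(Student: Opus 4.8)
The plan is to check commutativity of the square pointwise and then read off the naturality statement. Fix $k \in K$ and $x \in M$, choose a section $\sigma_i \colon U_i \to K$ of $\theta_m$ with $x \in U_i$ and a section $\tilde\sigma_j \colon \tilde U_j \to \tilde K$ of $\tilde\theta_m$ with $x \in \tilde U_j$, and write $s_i = p \circ \sigma_i$ and $\tilde s_j = \tilde p \circ \tilde\sigma_j$ for the induced sections of $\pi \colon K/H \to M$ and $\tilde\pi \colon \tilde K/\tilde H \to M$ as in Remark \ref{rem: Liegp:loctgpd} b). Since $\Bis$ acts on a morphism by post-composition and $\cR(\varphi)$ is $\langle gH,hH\rangle \mapsto \langle \varphi(g)\tilde H, \varphi(h)\tilde H\rangle$, unwinding the definition of $a_{\theta,H}$ from Lemma \ref{lem:canonical_morphism_into_bisections} gives
\[
 \bigl(\Bis\circ\cR(\varphi)\bigr)\bigl(a_{\theta,H}(k)\bigr)(x) = \langle \varphi(k)\varphi(\sigma_i(x))\tilde H ,\ \varphi(\sigma_i(x))\tilde H\rangle ,
\]
whereas the other composite evaluates to
\[
 a_{\tilde\theta,\tilde H}(\varphi(k))(x) = \langle \varphi(k)\tilde\sigma_j(x)\tilde H ,\ \tilde\sigma_j(x)\tilde H\rangle .
\]
It then remains to identify these two elements of the gauge groupoid $\cR(\tilde\theta,\tilde H)$.

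The key observation is that both $\varphi(\sigma_i(x))\tilde H$ and $\tilde\sigma_j(x)\tilde H$ lie in the same fibre $\tilde\pi^{-1}(x)$. Indeed $\tilde\pi(\tilde\sigma_j(x)\tilde H) = \tilde\theta(\tilde\sigma_j(x),m) = x$ by the defining property of $\tilde\sigma_j$, while $\tilde\pi(\varphi(\sigma_i(x))\tilde H) = \tilde\theta(\varphi(\sigma_i(x)),m) = \theta(\sigma_i(x),m) = x$, where the middle equality is precisely the morphism condition $\tilde\theta^\wedge \circ \varphi = \theta^\wedge$ defining $\varphi$. Hence there is a unique element $\lambda$ of the structure group $\tilde\Lambda_m = \tilde K_m/\tilde H$ of $\tilde\pi$ with $\varphi(\sigma_i(x))\tilde H \cdot \lambda = \tilde\sigma_j(x)\tilde H$ for the right $\tilde\Lambda_m$-action. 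Now I would use that the bracket $\langle\cdot,\cdot\rangle$ is invariant under the simultaneous right $\tilde\Lambda_m$-action on both entries, and that this right action commutes with left multiplication by $\varphi(k)$ on $\tilde K/\tilde H$ (both being restrictions of the multiplication of $\tilde K$ to cosets). Applying $\lambda$ to both slots of the first expression then turns its second entry into $\tilde\sigma_j(x)\tilde H$ and its first entry into $\varphi(k)\tilde\sigma_j(x)\tilde H$, which is exactly the second expression. Conceptually this is the statement that the principal bundle morphism $gH \mapsto \varphi(g)\tilde H$ underlying $\cR(\varphi)$ intertwines left translation by $k$ with left translation by $\varphi(k)$; the displayed computation is merely its gauge-groupoid shadow.

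For the naturality statement I would note that each $a_{\theta,H}$ is, by the commuting triangle of Lemma \ref{lem:canonical_morphism_into_bisections}, a morphism in $\cat{LieGroups}_{\Diff(M)}$ from $\op{For}(\theta,H) = \theta^\wedge$ to $(\Bis\circ\cR)(\theta,H)$, the latter being $\Bis(\cR(\theta,H))$ equipped with its structure morphism $(\beta_\cR)_*$. Together with the commuting square just established, which reads $(\Bis\circ\cR)(\varphi) \circ a_{\theta,H} = a_{\tilde\theta,\tilde H} \circ \op{For}(\varphi)$, this says exactly that $(a_{\theta,H})_{(\theta,H)}$ is a natural transformation $\op{For} \Rightarrow \Bis\circ\cR$.

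I expect the only real obstacle to be the bookkeeping forced by the two independent section atlases: the left-hand composite is expressed through the atlas $(\sigma_i)$ of $\theta_m$ and the right-hand one through the atlas $(\tilde\sigma_j)$ of $\tilde\theta_m$, so the two values are not equal as formal expressions but only after passing to the quotient by $\tilde\Lambda_m$. Pinning down the correct structure-group element $\lambda$ and verifying that the left and right actions on $\tilde K/\tilde H$ genuinely commute is where care is needed, and the hypothesis $\tilde\theta^\wedge\circ\varphi = \theta^\wedge$ is the crucial input that places the two cosets in a common fibre.
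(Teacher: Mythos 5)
Your proof is correct, and its skeleton matches the paper's: both arguments compute the two composites pointwise through section atlases, arrive at the same expression $\langle \varphi(k)\varphi(\sigma_i(x))\tilde{H},\varphi(\sigma_i(x))\tilde{H}\rangle$ for the composite $\Bis(\cR(\varphi))\circ a_{\theta,H}$, and use the homomorphism property of $\varphi$ together with the morphism condition $\tilde{\theta}^\wedge\circ\varphi=\theta^\wedge$ as the decisive inputs; the naturality conclusion is drawn in the same way in both. The difference lies in how the atlas bookkeeping is resolved, and here your route genuinely diverges. The paper never introduces your structure-group element $\lambda$: it observes that $\tilde{\pi}\circ(\varphi\circ\sigma_i)=\tilde{\theta}_m\circ(\varphi\circ\sigma_i)=\theta_m\circ\sigma_i=\id_{U_i}$ (this is where the morphism condition enters there), so the pushed-forward family $(\varphi\circ s_i)_{i\in I}$ is itself a section atlas of $\tilde{\pi}\colon\tilde{K}/\tilde{H}\to M$, and it then expresses $a_{\tilde{\theta},\tilde{H}}\circ\varphi$ in \emph{that} atlas; with this choice the two composites agree verbatim once $\varphi(k\cdot\sigma_i(x))=\varphi(k)\varphi(\sigma_i(x))$ is used. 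You instead keep an independent atlas $(\tilde{\sigma}_j)$ for the target, use the morphism condition to place the two second-slot cosets in a common fibre, and reconcile them via the diagonal $\tilde{\Lambda}_m$-invariance of the gauge-groupoid bracket together with the commutation of left translation with the right structure-group action. Both resolutions are sound and consume the same hypothesis; the paper's choice buys a one-line comparison at the cost of checking that the pushed-forward sections are sections, while yours re-proves, in effect, the atlas-independence of the gauge-groupoid expression for $a_{\tilde{\theta},\tilde{H}}$ --- a fact implicit in Lemma \ref{lem:canonical_morphism_into_bisections}, since $a_{\tilde{\theta},\tilde{H}}(\varphi(k))$ is abstractly the bundle automorphism given by left translation, independent of any atlas.
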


\begin{proof}
 Choose a section atlas $(s_i \colon U_i \rightarrow \pi^{-1} (U_i) \subseteq K/H)_{i \in I}$ of the $\Lambda_m$-principal bundle $\pi \colon K/H \rightarrow M$ as in Remark \ref{rem: Liegp:loctgpd} b). 
 From the definition of a morphism in $\cat{TransPairs}_{M}$ we infer that $\tilde{\theta} \circ (\varphi \times \id_M) = \theta$. 
 Thus $\tilde{\pi} \circ (\varphi \circ \sigma_i) = \tilde{\theta}_m \circ (\varphi \circ \sigma_i) = \theta_m \circ \sigma_i = \pi \circ \sigma_i = \id_{U_i}$ holds and $(\varphi \circ \sigma_i)_{i\in I}$ is a section atlas of the $\tilde{K}_m$-bundle $\tilde{\pi} \colon \tilde{K} \rightarrow M$.
 This section atlas descents to a section atlas of $\tilde{K} / \tilde{H} \xrightarrow{\tilde{\Lambda}_m} M$ which we denote by abuse of notation as $(\varphi \circ s_i)_{i \in I}$.
 In the following we will assume that $x \in U_i$ and the mappings are represented as in Remark \ref{rem: Liegp:loctgpd} b) with respect to the section atlases $(s_i)_{i \in I}$ and $(\varphi \circ s_i)_{i \in I}$.
 Let us now compute the composition $a_{\tilde{\theta},\tilde{H}} \circ \varphi$ given in the diagram.
 Then we obtain for $k \in K$ the formula  
  \begin{displaymath}
   a_{\tilde{\theta},\tilde{H}} \circ \varphi (k) (x) = (x \mapsto \langle \varphi (k) \varphi (s_i(x)) , \varphi (s_i (x))\rangle) = (x \mapsto \langle \varphi (k) \varphi(\sigma_i (x)) \tilde{H} , \varphi (\sigma_i (x)) \tilde{H}\rangle).
  \end{displaymath}
 Now we compute the other composition $(\Bis (\cR (\varphi))) \circ a_{\theta,H}$ of morphisms in the diagram. 
 We obtain 
 \begin{align*}
  \Bis (\cR(\varphi)) \circ a_{\theta,H} (k) &= (x \mapsto \cR(\varphi) ( \langle k\cdot s_i (x),s_i(x)\rangle)) = (x \mapsto \langle \varphi (k \cdot \sigma_i (x))\tilde{H}, \varphi (\sigma_i (x))\tilde{H}\rangle) .
 \end{align*}
 Comparing the right hand sides, the diagram commutes since $\varphi \colon K \rightarrow \tilde{K}$ is a Lie group morphism.  
 
 Lemma \ref{lem:canonical_morphism_into_bisections} implies that the morphisms $a_{\theta,H}$ (for $(\theta, H)$ in $\cat{TransPairs}_M$) are morphisms in $\cat{LieGroups}_{\Diff (M)}$.
 Moreover, we have just seen that the family $(a_{\theta , H})_{(\theta, H) \in \cat{TransPairs}_M}$ is natural and thus defines a natural transformation $\op{For} \Rightarrow \Bis \circ \cR$ in $\cat{LieGroups}_{\Diff (M)}$
\end{proof}

In \cite[Proposition 4.16]{SchmedingWockel15} the kernel of a transitive pair $(\theta,H)$ has been identified with the kernel of the Lie group morphism $a_{\theta,H}$.
Hence the preceeding Proposition shows that morphisms of transitive pairs map kernels of transitive pairs into kernels of transitive pairs.

\begin{corollary}\label{cor: ker:cat}
Let $\varphi \colon (\theta, H) \rightarrow (\tilde{\theta},\tilde{H})$ be a morphism of transitive pairs. 
Then $\varphi$ maps the kernel of the transitive pair $(\theta,H)$ into the kernel of the transitive pair $(\tilde{\theta}, \tilde{H})$.
\end{corollary}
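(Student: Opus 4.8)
The plan is to reduce the statement to a short diagram chase built on the two facts recalled immediately before the corollary. First I would invoke \cite[Proposition 4.16]{SchmedingWockel15}, which identifies the kernel of a transitive pair $(\theta,H)$ with $\Ker(a_{\theta,H})$, the kernel of the canonical Lie group morphism $a_{\theta,H}\colon K\to\Bis(\cR(\theta,H))$ from Lemma \ref{lem:canonical_morphism_into_bisections}; applying the same identification to $(\tilde\theta,\tilde H)$ shows that its kernel is $\Ker(a_{\tilde\theta,\tilde H})$. Thus the whole problem becomes showing the inclusion $\varphi(\Ker(a_{\theta,H}))\subseteq\Ker(a_{\tilde\theta,\tilde H})$.

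Next I would feed in the commutative square of Proposition \ref{prop: tech:atha}, namely
\[
 a_{\tilde\theta,\tilde H}\circ\varphi = \Bis(\cR(\varphi))\circ a_{\theta,H}.
\]
Taking any $g\in\Ker(a_{\theta,H})$ and evaluating both sides at $g$ yields
\[
 a_{\tilde\theta,\tilde H}(\varphi(g)) = \Bis(\cR(\varphi))(a_{\theta,H}(g)) = \Bis(\cR(\varphi))(1),
\]
where $1$ denotes the neutral bisection of $\cR(\theta,H)$. Since $\Bis(\cR(\varphi))$ is a morphism of Lie groups (being the bisection functor applied to the groupoid morphism $\cR(\varphi)$), it carries the identity to the identity, so $a_{\tilde\theta,\tilde H}(\varphi(g))=1$. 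Hence $\varphi(g)\in\Ker(a_{\tilde\theta,\tilde H})$, which by the identification above is exactly the kernel of $(\tilde\theta,\tilde H)$, proving the claimed inclusion.

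There is essentially no genuine obstacle here beyond bookkeeping: once the kernel of a transitive pair is known to coincide with $\Ker(a_{\theta,H})$ and the naturality square of Proposition \ref{prop: tech:atha} is in hand, the argument is a one-line diagram chase using only that group homomorphisms preserve the identity. The only point deserving a moment's care is applying the kernel identification on both the source pair $(\theta,H)$ and the target pair $(\tilde\theta,\tilde H)$; since Proposition \ref{prop: tech:atha} also records that each $a_{\theta,H}$ is a morphism in $\cat{LieGroups}_{\Diff(M)}$, both identifications are legitimate and the conclusion follows.
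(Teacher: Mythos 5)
Your proof is correct and follows exactly the same route as the paper: identify the kernel of each transitive pair with $\Ker(a_{\theta,H})$ via \cite[Proposition 4.16]{SchmedingWockel15}, then conclude from the commutative square of Proposition \ref{prop: tech:atha}. The only difference is that you spell out the one-line diagram chase that the paper leaves implicit.
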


\begin{proof}
Recall from \cite[Proposition 4.16]{SchmedingWockel15} that the kernel of the transitive pair $(\theta,H)$ coincides with the kernel of the Lie group morphims $a_{\theta,H}$.
Hence the assertion follows from the commutative diagram in Proposition \ref{prop: tech:atha}.
\end{proof}

 In the next section we will see that the family $(a_{\theta , H})_{(\theta, H) \in \cat{TransPairs}_M}$ induces a natural transformation in $\cat{TransPairs}_M$. 
 This will establish a close connection between $\cR$ and an augmented version of the bisection functor.
 
 \section{The augmented bisection functor and the locally trivial construction functor}\label{sect: constr:fun}
 
 In this section we define the augmented bisection functor and establish its connection to the functor $\cR$.
 Unless stated explicitly otherwise, we will assume throughout the whole section that $M$ is a compact and \emph{connected} manifold and consider only groupoids in $\catltBLie$.
 Moreover, we choose and fix once and for all $m \in M$.
 Taking $M$ to be a connected manifold allows us to ignore certain technicalities in the definition of the augmented bisection functor (see Definition \ref{defn: aug:bis}).
 With suitable care one can extend the results outlined in this section also for non-connected $M$. 
 However, then one has to restrict the occurring functors to suitable full subcategories of $\catltBLie$ and the statements become a lot more technical. 
 We will briefly comment on this in Remark \ref{rem: non:com}.
 
 We have already seen in Example \ref{ex: tgp:bis} a) that for a locally trivial Lie groupoid which satisfy an additional assumption, the action of the bisections and a certain subgroup yield a transitive pair.
 If $M$ is a connected manifold, no additional assumption is needed and we obtain the following result:
 
 \begin{lemma}\label{lem: always:trans}
  Let $\cG = (G \toto M)$ be a locally trivial Banach-Lie groupoid. 
  Then $(\beta \circ \ev \colon \Bis (\cG) \times M \rightarrow M, \Bisf{m} (\cG))$ is a transitive pair.
 \end{lemma}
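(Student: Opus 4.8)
The plan is to verify the two defining conditions of a transitive pair (Definition \ref{defn: tgpair}) for the candidate $(\beta \circ \ev, \Bisf{m}(\cG))$. Example \ref{ex: tgp:bis} a) already records that $\Bisf{m}(\cG)$ is a normal co-Banach regular Lie subgroup of $\Loop{m}(\cG)$, and that $\Loop{m}(\cG)$ is exactly the $m$-stabiliser of the action $\beta \circ \ev$. Thus condition \ref{defn: tgpair_2} is essentially in hand from the cited results. The only remaining work is to establish condition \ref{defn: tgpair_1}, namely that $(\beta \circ \ev)_m = \beta \circ \ev(\cdot, m) \colon \Bis(\cG) \to M$ is a surjective submersion; equivalently, that the action $\beta \circ \ev$ is transitive, since the relevant remark in Example \ref{ex: tgp:bis} a) reduces the transitive-pair property precisely to transitivity of the action on the connected $M$.

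First I would reduce surjectivity of $(\beta \circ \ev)_m$ to the statement that $\cG$ admits bisections through each arrow. Indeed, local triviality means the anchor $(\beta,\alpha)\colon G \to M \times M$ is a surjective submersion, so for any target point $y \in M$ there is an arrow $g \in G$ with $\beta(g) = y$ and $\alpha(g) = m$. If there is a bisection $\sigma$ with $\sigma(m) = g$, then $(\beta \circ \ev)_m(\sigma) = \beta(\sigma(m)) = \beta(g) = y$, giving surjectivity. So the heart of the matter is producing a bisection through a prescribed arrow with source $m$. Here the hypothesis that $M$ is connected is what lets me bypass the extra assumption appearing in Example \ref{ex: tgp:bis} a): I expect the argument to produce, for a given arrow $g$ over $(y,m)$, a bisection whose value at $m$ is $g$, using local triviality to construct a local section through $g$ and then extending it globally. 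This extension is where connectedness of $M$ enters, allowing one to patch local bisections without the obstructions that arise on multiple components.

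The main obstacle will be the global extension step: turning a locally defined section of $\alpha$ passing through the arrow $g$ into a genuine bisection $\sigma \in \Bis(\cG)$, i.e.\ a global section of $\alpha$ whose composite $\beta \circ \sigma$ is a diffeomorphism of $M$. Local triviality gives a bisection-valued trivialisation near any point and the gauge-groupoid picture of Remark \ref{rem: Liegp:loctgpd} provides explicit local bisections, but arranging that $\beta \circ \sigma$ is globally a diffeomorphism (rather than merely a local one) requires controlling the behaviour over all of $M$ at once. The natural route is to invoke the identification of bisections of a locally trivial groupoid with bundle automorphisms of the associated principal bundle (as in \eqref{eq: ident:AutBis}), and to use that the vertex group $\Vtx{m}$ acts transitively on the fibre so that any prescribed fibre element is attained; transitivity of $\Diff(M)$ on the connected $M$ then yields the target diffeomorphism.

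Once surjectivity of $(\beta \circ \ev)_m$ is established, I would upgrade it to a submersion by a standard local-section argument: local triviality furnishes local sections of the anchor, which compose with the bisection-valued trivialisation to give local sections of $(\beta \circ \ev)_m$ through any point of its image, and the existence of such smooth local sections through every point is exactly the submersion condition in the Michal--Bastiani setting. With \ref{defn: tgpair_1} verified and \ref{defn: tgpair_2} supplied by Example \ref{ex: tgp:bis} a), the pair $(\beta \circ \ev, \Bisf{m}(\cG))$ satisfies both conditions of Definition \ref{defn: tgpair} and is therefore a transitive pair, completing the proof.
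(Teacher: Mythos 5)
Your reduction lands on a claim that is strictly stronger than what is needed and is in fact false under the hypotheses of the lemma. To get transitivity of $\beta \circ \ev$ you only need, for each $y \in M$, \emph{some} bisection $\sigma$ with $\beta(\sigma(m)) = y$; instead you insist on a bisection through a \emph{prescribed} arrow $g$ with $\alpha(g)=m$, $\beta(g)=y$. That stronger property says exactly that $g$ lies in the open subgroupoid $\ev(\Bis(\cG)\times M)$, i.e.\ that $\cG$ belongs to $\catltevBLie$ --- and the whole point of Section \ref{sect: constr:fun} (the subcategory $\catltevBLie$ and the coreflection $\cR \circ \eBis$) is that this can fail for locally trivial Banach--Lie groupoids over a connected compact base. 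Concretely, take the gauge groupoid of the flat principal $S_3$-bundle over $S^1$ with holonomy the transposition $(1\,2)$: bisections correspond to bundle automorphisms, and every automorphism moves a point of the fibre over $m$ only by an element of the centraliser of the holonomy, here $\{\id,(1\,2)\}$, so the vertex arrow corresponding to the $3$-cycle $(1\,2\,3)$ lies on no bisection, although the action of $\Bis(\cG)$ on $S^1$ is still transitive. This pinpoints the flawed step in your sketch: the vertex group $\Vtx{m}$ does act simply transitively on the fibre, but a prescribed fibre translation need not extend to a global bundle automorphism; patching local bisections into a global one is obstructed, and connectedness of $M$ does not remove the obstruction. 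What does remove it is source connectedness of $\cG$ (connectedness of the bundle's total space), which is the hypothesis of Lemma \ref{lem: sc:nuff}, not of this lemma.

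The paper's proof avoids arrows altogether: by \cite[Example 3.16]{SchmedingWockel14}, local triviality implies that the image of $\beta_* \colon \Bis(\cG) \to \Diff(M)$ contains the identity component $\Diff(M)_0$, and by \cite[Corollary 2.17]{SchmedingWockel15} the group $\Diff(M)_0$ acts transitively on the connected compact $M$; since $\beta \circ \ev$ factors as the canonical $\Diff(M)$-action composed with $\beta_*$, the action is transitive, and Example \ref{ex: tgp:bis} a) then yields conditions \ref{defn: tgpair_1} and \ref{defn: tgpair_2}. Your framing of the problem --- condition \ref{defn: tgpair_2} from Example \ref{ex: tgp:bis} a), and reduction of \ref{defn: tgpair_1} to transitivity of the action --- matches the paper, but the argument for transitivity itself must be replaced by the one above.
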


 \begin{proof}
  The groupoid $\cG$ is a locally trivial Lie groupoid, whence the image of $\beta_* \colon \Bis (\cG) \rightarrow \Diff (M), \sigma \mapsto \beta \circ \sigma$ contains the identity component $\Diff (M)_0$ of $\Diff (M)$ (cf.\ \cite[Example 3.16]{SchmedingWockel14}).
  Since $M$ is connected, \cite[Corollary 2.17]{SchmedingWockel15} implies that the Lie group action $\gamma \colon \Diff (M) \times M \rightarrow M, (\varphi , m) \mapsto \varphi (m)$ restricts to a transitive action of the group $\Diff (M)_0$. 
  We deduce from $\beta \circ \ev = \gamma \circ \beta_*$ that $\beta \circ \ev$ is transitive, whence $(\beta \circ \ev \colon \Bis (\cG) \times M \rightarrow M, \Bisf{m} (\cG))$ is a transitive pair by Example \ref{ex: tgp:bis} a).  
 \end{proof}

 Note that the Lemma asserts that for a locally trivial Lie groupoid $\cG$ over a connected manifold, the canonical action of the bisection together with $\Bisf{m} (\cG)$ always yields a transitive pair.
 For non-connected base this statement is false (see \cite[Remark 2.18 b)]{SchmedingWockel15} for an example).
 
 \begin{lemma}\label{lem: bisfun:tpair}
  Let $\psi \colon \cG \rightarrow \cG'$ be a morphism in $\catltBLie$. 
  Then $\Bis (\psi) \colon \Bis (\cG) \rightarrow \Bis (\cG')$ in $\cat{LieGroups}_{\Diff (M)}$ induces a morphism of transitive pairs $(\beta_\cG \circ \ev , \Bisf{m} (\cG)) \rightarrow (\beta_{\cG'} \circ \ev, \Bisf{m}(\cG'))$.
 \end{lemma}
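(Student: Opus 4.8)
The plan is to verify directly the two defining conditions of a morphism in $\cat{TransPairs}_M$ for the Lie group homomorphism $\Bis(\psi)$. By Lemma \ref{lem: always:trans} both $(\beta_\cG \circ \ev, \Bisf{m}(\cG))$ and $(\beta_{\cG'} \circ \ev, \Bisf{m}(\cG'))$ are genuine transitive pairs, so what remains is to check that (i) $\Bis(\psi)$ is compatible with the underlying actions, i.e.\ $(\beta_{\cG'} \circ \ev)^\wedge \circ \Bis(\psi) = (\beta_\cG \circ \ev)^\wedge$, and (ii) $\Bis(\psi)$ maps $\Bisf{m}(\cG)$ into $\Bisf{m}(\cG')$. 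I will treat these in turn.

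For (i) I would observe that the adjoint $(\beta_\cG \circ \ev)^\wedge \colon \Bis(\cG) \to \Diff(M)$ is precisely the map $(\beta_\cG)_* \colon \sigma \mapsto \beta_\cG \circ \sigma$, and likewise for $\cG'$. Since $\Bis(\psi)$ is by construction a morphism in the slice category $\cat{LieGroups}_{\Diff(M)}$, it already satisfies $(\beta_{\cG'})_* \circ \Bis(\psi) = (\beta_\cG)_*$; this is exactly condition (i), so it is inherited for free and requires no further argument.

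The substance of the proof is (ii), which reduces to the fact that a groupoid morphism over $\id_M$ preserves the object inclusions. Given $\sigma \in \Bisf{m}(\cG)$, that is $\sigma(m) = 1_\cG(m)$, I would compute, using that $\Bis(\psi)(\sigma) = \psi \circ \sigma$ by Definition \ref{defn:functorial_interpretation}, the value $\Bis(\psi)(\sigma)(m) = \psi(\sigma(m)) = \psi(1_\cG(m))$. Because $\psi$ intertwines the object inclusions, $\psi \circ 1_\cG = 1_{\cG'}$, this equals $1_{\cG'}(m)$, the identity arrow of $\cG'$ at $m$. Hence $\Bis(\psi)(\sigma) \in \Bisf{m}(\cG')$, establishing (ii). Combining (i) and (ii) shows that $\Bis(\psi)$ meets both defining conditions of a morphism of transitive pairs, which is the claim. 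I do not expect any genuine obstacle: (i) comes gratis from $\Bis$ already landing in $\cat{LieGroups}_{\Diff(M)}$, while (ii) is an immediate consequence of the functoriality of $\Bis$ together with the preservation of identities by groupoid morphisms over $\id_M$.
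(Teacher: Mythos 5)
Your proof is correct and follows essentially the same route as the paper: the paper likewise dismisses the action-compatibility condition as automatic (since $(\beta_\cG \circ \ev)^\wedge = (\beta_\cG)_*$ and $\Bis(\psi)$ is already a morphism in $\cat{LieGroups}_{\Diff(M)}$) and reduces the claim to showing $\Bis(\psi)(\Bisf{m}(\cG)) \subseteq \Bisf{m}(\cG')$ via $\psi \circ 1_\cG = 1_{\cG'}$. Your write-up is merely slightly more explicit about the first condition, which the paper compresses into the phrase ``we only have to prove.''
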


 \begin{proof}
   We only have to prove that $\Bis (\psi)$ maps the subgroup $\Bisf{m} (\cG)) = \{\sigma \in \Bis (\cG) \mid \sigma (m) = 1_{\cG} (m) \}$ to $\Bisf{m} (\cG')$.
   However, since $\psi$ is a groupoid morphism over $M$, $\psi \circ 1_\cG = 1_{\cG'}$. 
   Hence $\Bis (\psi ) (\sigma ) (m) = \psi \circ \sigma (m) = 1_{\cG'} (m)$ for all $\sigma \in \Bisf{m} (\cG)$ and $\Bis (\varphi) (\Bisf{m} (\cG)) \subseteq \Bisf{m} (\cG')$.
 \end{proof}

 Using Lemma \ref{lem: always:trans} and Lemma \ref{lem: bisfun:tpair} we can now define the augmented bisection functor.
 
\begin{definition}\label{defn: aug:bis}
 The \emph{augmented bisection functor} on $\catltBLie$ is defined as
 \begin{align*}
   \eBis \colon \catltBLie &\rightarrow \cat{TransPairs}_{M},\\
   \cG \quad  &\mapsto (\beta_\cG \circ \ev \colon \Bis (\cG ) \times M \rightarrow M, \Bisf{m} (\cG)),\\
 (\cG \xrightarrow{\psi} \tilde{\cG}) \quad &\mapsto \Bis (\psi). 
  \end{align*}
\end{definition}
 
 \begin{remark}
  Composing the functor $\eBis$ with the forgetful functor $\op{For}$ from Remark \ref{rem: forgetful} we obtain precisely the functor $\Bis|_{\catltBLie} \colon \catltBLie \rightarrow \cat{LieGroups}_{\Diff (M)}$.
 \end{remark}

 The functor $\eBis$ is closely related to the functor $\cR$ which we discussed in the last section.
 To make this explicit, we recall results from \cite{SchmedingWockel15}.
 
 \begin{example}\label{ex: BisR:inv}
 Let $\cG = (G \toto M)$ be a locally trivial Banach-Lie groupoid.
 We apply $\cR$ to the associated transitive pair $\eBis (\cG) = (\beta \circ \ev, \Bisf{m} (\cG))$ to obtain the gauge groupoid $\cR (\beta \circ \ev, \Bisf{m} (\cG)$.
 This gauge groupoid is related to the locally trivial Lie groupoid $\cG$ via the groupoid homomorphism 
  \begin{equation} \label{eq: iso:chi} 
   \chi_\cG \colon \frac{\Bis (\cG) / \Bisf{m}(\cG) \times \Bis (\cG) / \Bisf{m}(\cG)}{\Loop{m} (\cG)/\Bisf{m}(\cG)} \rightarrow G ,\quad \langle \sigma \Bisf{m}(\cG) , \tau \Bisf{m}(\cG)\rangle \mapsto \sigma (m)\cdot(\tau(m))^{-1}.
  \end{equation}
 Recall from \cite[Lemma 4.21]{SchmedingWockel15} that $\chi_\cG$ induces an isomorphism of the gauge groupoid $\cR (\eBis (\cG)$ onto the open and wide subgroupoid $\ev (\Bis (\cG) \times M) = \{g \in G \mid g =\sigma (\alpha (g)) \text{ for some } \sigma \in \Bis (\cG) \}$ (see \cite[Theorem 2.14]{SchmedingWockel15}).
 
 Recall from Lemma \ref{lem:canonical_morphisms_commute} that the map $a_{\eBis (\cG)} \colon \eBis (\cG) \rightarrow \eBis (\cR (\eBis (\cG)))$ is a morphism of transitive pairs.
 Moreover, it is an isomorphism of transitive pairs with inverse $\eBis (\chi_\cG)$. 
 This follows from the fact that $a_{\eBis (\cG)} \colon \Bis (\cG) \rightarrow \Bis (\cR (\eBis (\cG)))$ is an isomorphism of Lie groups with inverse $\Bis (\chi_\cG)$ by \cite[Lemma 4.21 b)]{SchmedingWockel15}. 
\end{example}
 
 We will now establish that the (Re-)construction functor $\cR$ defined in the last section and the augmented bisection functor form a pair of adjoint functors.
 To this end, we will prove first that the morphisms $\chi_\cG$ induce a natural transformation from $\cR \circ \eBis$ to the identity functor of $\catltBLie$ .
 
 \begin{lemma}\label{lem: chi:funct}
 The family $(\chi_\cG)_{\cG \in \catltBLie}$ of groupoid morphisms over $M$ defined as in \eqref{eq: iso:chi} forms a natural transformation 
 $\chi \colon \cR \circ \eBis \Longrightarrow \id_{\catltBLie}$. 
 \end{lemma}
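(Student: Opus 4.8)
The plan is to fix an arbitrary morphism $\psi\from \cG\to \cG'$ in $\catltBLie$ and verify that the naturality square
\[
\begin{CD}
\cR(\eBis(\cG)) @>{\chi_\cG}>> \cG \\
@V{\cR(\eBis(\psi))}VV @VV{\psi}V \\
\cR(\eBis(\cG')) @>{\chi_{\cG'}}>> \cG'
\end{CD}
\]
commutes. By Example \ref{ex: BisR:inv} each $\chi_\cG$ is already a morphism of Lie groupoids over $M$ (via \cite[Lemma 4.21]{SchmedingWockel15}), and $\psi$ together with $\cR(\eBis(\psi))$ are morphisms over $\id_M$ by hypothesis and by functoriality of $\cR$ and $\eBis$. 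Hence it suffices to check that the two composites agree as maps on arrows; compatibility with source, target and the base is automatic since all four maps live over $\id_M$.

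First I would make the left vertical arrow explicit. By Definition \ref{defn: aug:bis} and Lemma \ref{lem: bisfun:tpair}, $\eBis(\psi)$ is the morphism of transitive pairs whose underlying Lie group morphism is $\Bis(\psi)\from\Bis(\cG)\to\Bis(\cG')$, $\sigma\mapsto\psi\circ\sigma$, carrying $\Bisf{m}(\cG)$ into $\Bisf{m}(\cG')$. Inserting this into the description of $\cR$ on morphisms from Definition \ref{defn: funct:R} yields
\[
\cR(\eBis(\psi))\from \langle \sigma\,\Bisf{m}(\cG),\, \tau\,\Bisf{m}(\cG)\rangle \mapsto \langle (\psi\circ\sigma)\,\Bisf{m}(\cG'),\, (\psi\circ\tau)\,\Bisf{m}(\cG')\rangle .
\]

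Then I would chase a representative $\langle \sigma\,\Bisf{m}(\cG),\, \tau\,\Bisf{m}(\cG)\rangle$ around both paths. Along the top–right path, \eqref{eq: iso:chi} gives $\chi_\cG(\langle \sigma\,\Bisf{m}(\cG), \tau\,\Bisf{m}(\cG)\rangle)=\sigma(m)\cdot(\tau(m))^{-1}$, and applying $\psi$, which preserves the partial multiplication and the inversion $\iota$, gives $\psi(\sigma(m))\cdot\psi(\tau(m))^{-1}$. Along the left–bottom path, the formula above followed by \eqref{eq: iso:chi} for $\cG'$ gives $(\psi\circ\sigma)(m)\cdot((\psi\circ\tau)(m))^{-1}=\psi(\sigma(m))\cdot\psi(\tau(m))^{-1}$. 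The two outputs coincide, so the square commutes and $\chi$ is natural.

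The argument is essentially formal; the only points needing care are independence of the chosen coset representatives and genuine composability in the groupoid, and both are already guaranteed. Well-definedness of $\chi_\cG$ and of $\cR(\eBis(\psi))$ as morphisms comes from Example \ref{ex: BisR:inv} and Definition \ref{defn: funct:R}, while composability of $\sigma(m)$ with $(\tau(m))^{-1}$ (and of their $\psi$-images) holds because $\sigma,\tau$ are sections of $\alpha$, so $\alpha(\sigma(m))=m=\alpha(\tau(m))=\beta((\tau(m))^{-1})$, and $\psi$ commutes with $\alpha$ and $\beta$. I therefore expect no substantial obstacle: the entire content of the lemma is carried by $\psi$ being a morphism of Lie groupoids.
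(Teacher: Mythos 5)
Your proposal is correct and takes essentially the same route as the paper: the paper's proof likewise fixes a morphism $\psi\from \cG\to \cG'$, evaluates both composites on a representative $\langle \sigma\,\Bisf{m}(\cG),\tau\,\Bisf{m}(\cG)\rangle$, and concludes from $\psi(\sigma(m))\cdot(\psi(\tau(m)))^{-1}=\psi(\sigma(m)\cdot(\tau(m))^{-1})$, i.e.\ from $\psi$ being a groupoid morphism. Your additional remarks on well-definedness of the coset representatives and composability are details the paper leaves implicit, but they do not change the argument.
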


\begin{proof}
 Consider a Lie groupoid morphism $\psi \colon \cG \rightarrow \cG'$.
 To see that $\chi = (\chi_\cG)_{\cG \in \catltBLie}$ is a natural transformation we pick an element in $\cR (\eBis (\cG))$ and compute 
  \begin{align*}
    \chi_{\cG'} \circ \cR (\Bis (\psi)) (\langle \sigma \Bisf{m}(\cG) , \tau \Bisf{m} (\cG)\rangle) &= \chi_{\tilde{\cG}} (\langle (\psi \circ \sigma) \Bisf{m}(\cG') , (\psi \circ \tau) \Bisf{m} (\cG')\rangle) \\\
      &= \psi (\sigma (m)) \cdot (\psi (\tau (m))^{-1} = \psi (\sigma (m) \cdot (\tau (m))^{-1}) \\ 
      &= \psi \circ \chi_\cG (\langle \sigma \Bisf{m}(\cG) , \tau \Bisf{m} (\cG)\rangle).
  \end{align*}
\end{proof}

Combining Lemma \ref{lem: chi:funct} with the fact that $\Bis (\chi_\cG)$ is an isomorphism, one immediately obtains the following.

 \begin{corollary}\label{cor: nattrans}
  The natural transformation $\eBis (\chi) = (\eBis (\chi_\cG))_{\cG \in \catltBLie}$ is a natural isomorphism 
    \begin{displaymath}
     \eBis (\chi) \colon \eBis \circ \cR \circ \eBis \stackrel{\cong}{\Longrightarrow} \eBis 
    \end{displaymath}
  whose inverse is given by the natural transformation $(a_{\eBis (\cG)})_{\cG \in \catltBLie}$.
 \end{corollary}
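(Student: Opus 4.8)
The plan is to obtain the statement by assembling two ingredients that are already available, since at this point essentially no genuinely new work remains. First I would observe that naturality is free: by Lemma \ref{lem: chi:funct} the family $\chi = (\chi_\cG)_{\cG}$ is a natural transformation $\cR \circ \eBis \Rightarrow \id_{\catltBLie}$, and left-whiskering a natural transformation with the functor $\eBis$ again produces a natural transformation. Thus $\eBis(\chi)$, whose component at $\cG$ is $\eBis(\chi_\cG)$, is automatically a natural transformation $\eBis \circ \cR \circ \eBis \Rightarrow \eBis$; its naturality squares are simply the images under $\eBis$ of the naturality squares for $\chi$, so no separate verification is needed.

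Second, I would upgrade this natural transformation to a natural isomorphism by checking invertibility componentwise. This is precisely the content of Example \ref{ex: BisR:inv}: for every locally trivial Banach-Lie groupoid $\cG$ over the connected base $M$, the canonical morphism of transitive pairs $a_{\eBis(\cG)} \colon \eBis(\cG) \rightarrow \eBis(\cR(\eBis(\cG)))$ is an isomorphism whose inverse is exactly $\eBis(\chi_\cG)$. At bottom this rests on \cite[Lemma 4.21 b)]{SchmedingWockel15}, which shows that on underlying Lie groups $a_{\eBis(\cG)}$ and $\Bis(\chi_\cG)$ are mutually inverse isomorphisms, combined with the isomorphism criterion of Lemma \ref{lem: iso:char} to promote this to an isomorphism inside $\cat{TransPairs}_M$.

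With both pieces in hand I would conclude by the standard categorical fact that a natural transformation all of whose components are isomorphisms is itself a natural isomorphism, and that the family of componentwise inverses is again a natural transformation. Applying this to $\eBis(\chi)$ yields that $\eBis(\chi) \colon \eBis \circ \cR \circ \eBis \stackrel{\cong}{\Rightarrow} \eBis$ is a natural isomorphism whose inverse is $(a_{\eBis(\cG)})_{\cG \in \catltBLie}$, as claimed. The only point deserving care --- and the sole conceptual obstacle --- is that invertibility must be read in $\cat{TransPairs}_M$ rather than merely in $\cat{LieGroups}_{\Diff(M)}$; that is, one must know not only that the underlying Lie-group morphism is invertible but that its inverse also respects the distinguished subgroups $\Bisf{m}(\cG)$. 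This is exactly what Lemma \ref{lem: iso:char} secures, and it is the reason Example \ref{ex: BisR:inv} (rather than only Proposition \ref{prop: tech:atha}) is the decisive input.
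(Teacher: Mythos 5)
Your proof is correct and takes essentially the same route as the paper, which obtains the corollary immediately by combining Lemma \ref{lem: chi:funct} (naturality of $\chi$, hence of $\eBis(\chi)$ after whiskering with $\eBis$) with Example \ref{ex: BisR:inv} (each component $a_{\eBis(\cG)}$ is an isomorphism of transitive pairs with inverse $\eBis(\chi_\cG)$, resting on \cite[Lemma 4.21 b)]{SchmedingWockel15}). Your explicit appeal to Lemma \ref{lem: iso:char} to ensure invertibility is read in $\cat{TransPairs}_M$ rather than merely in $\cat{LieGroups}_{\Diff(M)}$ only makes explicit what the paper leaves implicit in Example \ref{ex: BisR:inv}.
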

 
 Having dealt with natural transformations in $\catltBLie$, we now turn to natural transformations in $\cat{TransPairs}_M$.
 We will lift the natural transformation $(a_{\theta,H})_{\cat{TransPairs}_M}$ in $\cat{LieGroups}_{\Diff (M)}$ (see Proposition \ref{prop: tech:atha}) to a natural transformation in $\cat{TransPairs}_{M}$.
 
\begin{lemma}\label{lem: tech:ath}
 Let $(\theta, H)$ and $(\tilde{\theta}.\tilde{H})$ be transitive pairs and $\varphi \colon (\theta,H) \rightarrow (\tilde{\theta}, \tilde{H})$ be a morphism in $\cat{TransPairs}_{M}$.
 \begin{enumerate}
  \item Then we obtain a commutative diagram in $\cat{TransPairs}_M$
  \begin{equation}\label{eq: diag:atH} \begin{aligned}
         \begin{xy}
      \xymatrix{
	(\theta,H) \ar[rr]^-{a_{\theta, H}} \ar[d]^{\varphi} & & \eBis (\cR (\theta , H)) \ar[d]^{\eBis \circ \cR (\varphi) } \\
	(\tilde{\theta}, \tilde{H}) \ar[rr]^-{a_{\tilde{\theta}, \tilde{H}}} & & \eBis (\cR (\tilde{\theta} , \tilde{H}))
  }
 \end{xy}         \end{aligned}.
    \end{equation}
 Hence the family $(a_{\theta,H})_{(\theta,H) \in \cat{TransPairs}_M}$ forms a natural transformation $\id_{\cat{TransPairs}_M} \Rightarrow \eBis \circ \cR$ in the category $\cat{TransPairs}_M$.   
  \item The map $\cR (a_{\theta,H}) \colon \cR (\theta,H) \rightarrow \cR (\eBis (\cR (\theta ,H)))$ is a Lie groupoid isomorphism with inverse $\chi_{\cR (\theta, H)}$.   
 \end{enumerate}
\end{lemma}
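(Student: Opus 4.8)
The plan is to lift the naturality already established in $\cat{LieGroups}_{\Diff(M)}$ to $\cat{TransPairs}_M$. First I would check that all four arrows of \eqref{eq: diag:atH} genuinely live in $\cat{TransPairs}_M$. Since $M$ is connected and $\cR(\theta,H)$ is a locally trivial Banach-Lie groupoid (Remark \ref{rem: Liegp:loctgpd}), Lemma \ref{lem: always:trans} shows that $\eBis(\cR(\theta,H)) = (\beta_\cR \circ \ev, \Bisf{m}(\cR(\theta,H)))$ is indeed a transitive pair; Lemma \ref{lem:canonical_morphisms_commute} then upgrades $a_{\theta,H}$ to a morphism of transitive pairs $(\theta,H) \to \eBis(\cR(\theta,H))$, and likewise for $a_{\tilde{\theta},\tilde{H}}$. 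The remaining arrow $\eBis(\cR(\varphi))$ is a morphism of transitive pairs because $\cR$ and $\eBis$ are functors. Now the forgetful functor $\op{For}$ of Remark \ref{rem: forgetful} is faithful (a morphism of transitive pairs is by definition nothing but its underlying Lie group morphism subject to extra conditions), and applying $\op{For}$ to \eqref{eq: diag:atH} reproduces verbatim the square of Proposition \ref{prop: tech:atha}, which commutes. Faithfulness then forces \eqref{eq: diag:atH} to commute in $\cat{TransPairs}_M$, and since this holds for every $\varphi$, the family $(a_{\theta,H})$ is the asserted natural transformation $\id_{\cat{TransPairs}_M} \Rightarrow \eBis \circ \cR$.

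\textbf{Part (b).} Write $\cG \coloneq \cR(\theta,H)$. By Example \ref{ex: BisR:inv} the morphism $\chi_\cG \colon \cR(\eBis(\cG)) \to \cG$ is an isomorphism onto the open, wide subgroupoid $\ev(\Bis(\cG) \times M)$; in particular it is injective. The heart of the argument is the identity $\chi_\cG \circ \cR(a_{\theta,H}) = \id_\cG$, which I would verify by a direct computation on a representative $\langle gH, kH\rangle$ of $\cG = \frac{K/H \times K/H}{\Lambda_m}$. Unwinding the definitions, $\cR(a_{\theta,H})(\langle gH, kH\rangle) = \langle a_{\theta,H}(g)\,\Bisf{m}(\cG), a_{\theta,H}(k)\,\Bisf{m}(\cG)\rangle$, and $\chi_\cG$ sends this to $a_{\theta,H}(g)(m) \cdot (a_{\theta,H}(k)(m))^{-1}$. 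Evaluating the bisection $a_{\theta,H}$ at the base point with the help of a section $s_i$ with $s_i(m) = k_m H$, $k_m \in \Stab{m}$ (Lemma \ref{lem:canonical_morphism_into_bisections}), gives $a_{\theta,H}(g)(m) = \langle g k_m H, k_m H\rangle$ and similarly for $k$; multiplying and inverting in the gauge groupoid and then using that $k_m H \in \Lambda_m$ can be absorbed by the diagonal invariance $\langle p\lambda, q\lambda\rangle = \langle p, q\rangle$ collapses the result back to $\langle gH, kH\rangle$. Hence $\chi_\cG \circ \cR(a_{\theta,H}) = \id_\cG$.

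Finally, I would conclude formally. The identity just proved exhibits $\cR(a_{\theta,H})$ as a right inverse of $\chi_\cG$, so $\chi_\cG$ is surjective; combined with its injectivity this makes $\chi_\cG$ a bijective isomorphism onto its image, i.e.\ a Lie groupoid isomorphism of $\cR(\eBis(\cR(\theta,H)))$ onto $\cR(\theta,H)$. Composing $\chi_\cG \circ \cR(a_{\theta,H}) = \id_\cG$ with $\chi_\cG^{-1}$ then yields $\cR(a_{\theta,H}) = \chi_\cG^{-1}$, as claimed. I expect the main obstacle to be the bookkeeping in the gauge-groupoid computation of Part (b): one must carry the section value $s_i(m) = k_m H$ through the multiplication and inversion and recognise $k_m H$ as an element of the structure group $\Lambda_m = \Stab{m}/H$ so that it cancels by diagonal invariance. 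The categorical packaging in Part (a) is, by contrast, essentially formal once the target pair is known to be transitive.
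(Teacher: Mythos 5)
Your proposal is correct and follows essentially the same route as the paper: part (a) by applying the faithful forgetful functor and invoking the commutative square of Proposition \ref{prop: tech:atha}, and part (b) by using Example \ref{ex: BisR:inv} to reduce to the identity $\chi_{\cR(\theta,H)} \circ \cR(a_{\theta,H}) = \id$, verified by the same gauge-groupoid computation with $s_i(m) = k_m H$ absorbed via diagonal invariance. Your write-up is in fact slightly more explicit than the paper's in checking that $\eBis(\cR(\theta,H))$ is a transitive pair (via Lemma \ref{lem: always:trans}) before applying Lemma \ref{lem:canonical_morphisms_commute}.
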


\begin{proof}
\begin{enumerate}
 \item The Lie group map $a_{\theta, H}$ is a morphism of transitive pairs by Lemma \ref{lem:canonical_morphisms_commute}.
 Hence \eqref{eq: diag:atH} makes sense as a diagram in $\cat{TransPairs}_M$. 
 Applying the forgetful functor $\op{For}$ to \eqref{eq: diag:atH}, Proposition \ref{prop: tech:atha} shows that we obtain a commutative diagram in $\cat{LieGroups}_{\Diff (M)}$. 
 We conclude that \eqref{eq: diag:atH} must also be commutative as a diagram in $\cat{TransPairs}_M$.
  
 \item From Example \ref{ex: BisR:inv} we deduce that $\chi_{\cR (\theta, H)}$ will be an isomorphism of Lie groupoids if it is surjective. 
 Hence the assertion will follow if we can prove that $\chi_{\cR (\theta , H)} \circ \cR(a_{\theta,H}) = \id_{\cR (\theta,H)}$.
 Let us use again the section atlas $(s_i)_{i \in I}$ of $\pi\colon K/H \rightarrow M$ from part a). 
 We evaluate the composition of both maps in an element and obtain
  \begin{align*}
   \chi_{\cR (\theta , H) } \circ \cR (a_{\theta, H}) (\langle kH,gH \rangle) & =  \chi_{\cR (\theta,H)} (\langle a_{\theta, H} (k) \Bisf{m} (\cR (\theta, H)) , a_{\theta, H} (g)\Bisf{m} (\cR (\theta, H))\rangle) \\
									      & = a_{\theta,H} (k) (m) \cdot (a_{\theta, H} (g)(m))^{-1} = \langle k\cdot s_i (m) , g \cdot s_i (m)\rangle \\
									      & = \langle kH,gH \rangle .   
  \end{align*}
 Note that this holds since the gauge groupoid operations are given as $\langle kH ,gH \rangle \cdot \langle gH , lH\rangle = \langle kH,lH\rangle$ and $\langle kH ,gH \rangle^{-1} = \langle gH,kH$ (cf.\ \cite[Example 1.1.15]{Mackenzie05General-theory-of-Lie-groupoids-and-Lie-algebroids}).
 Summing up, $\cR (a_{\theta,H})$ is an isomorphism with inverse $\chi_{\cR (\theta , H)}$.
\end{enumerate}
\end{proof}
  
 \begin{corollary}\label{cor: iso:tpair} 
  For a transitive pair $(\theta, H)$ and an arrow $g$ in the Lie groupoid $\cR (\theta, H)$, there is a bisection $\sigma \in \Bis (\cR (\theta,H))$ with $\sigma (\alpha (g)) =g$.
 \end{corollary}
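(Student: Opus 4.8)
The plan is to read off the statement from the isomorphism established in Lemma \ref{lem: tech:ath} (b), combined with the precise description of the image of $\chi$ recorded in Example \ref{ex: BisR:inv}. Write $\cG \coloneq \cR(\theta, H)$; by Remark \ref{rem: Liegp:loctgpd} a) this is a locally trivial Banach-Lie groupoid, so Example \ref{ex: BisR:inv} applies to it. That example tells us that $\chi_{\cG} = \chi_{\cR(\theta, H)}$ induces an isomorphism of $\cR(\eBis(\cG))$ \emph{onto the open and wide subgroupoid} $\ev(\Bis(\cG) \times M) = \{h \in \cR(\theta,H) \mid h = \sigma(\alpha(h)) \text{ for some } \sigma \in \Bis(\cG)\}$, i.e.\ onto the set of arrows that lie in the image of some bisection. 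On the other hand, Lemma \ref{lem: tech:ath} (b) asserts that $\chi_{\cR(\theta,H)}$ is an \emph{isomorphism of Lie groupoids} (being the inverse of $\cR(a_{\theta,H})$), hence in particular surjective onto all of $\cR(\theta,H)$.

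Comparing the two descriptions of the image of $\chi_{\cR(\theta,H)}$, I conclude that $\ev(\Bis(\cR(\theta,H)) \times M) = \cR(\theta,H)$; that is, every arrow $g$ of $\cR(\theta,H)$ lies in the image of some bisection $\sigma \in \Bis(\cR(\theta,H))$, which is exactly the assertion $\sigma(\alpha(g)) = g$. This is essentially the whole argument. The only point requiring care is to match the codomain of $\chi$ in the two statements, i.e.\ to recognise that the ``subgroupoid of arrows through which a bisection passes'' of Example \ref{ex: BisR:inv} is forced to be the full groupoid once $\chi$ is known to be bijective; there is no genuine obstacle, since the substantive content has already been packaged into Lemma \ref{lem: tech:ath} (b).

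As an independent and more hands-on verification one can argue directly with the morphism $a_{\theta,H}$ of Lemma \ref{lem:canonical_morphism_into_bisections}. Given $g = \langle g_1 H, g_2 H\rangle$ with $x_0 \coloneq \alpha(g) = \pi(g_2 H)$, pick an index $i$ with $x_0 \in U_i$; since $g_2 H$ and $s_i(x_0)$ lie in the same $\pi$-fibre there is a unique $\lambda \in \Lambda_m$ with $g_2 H \cdot \lambda = s_i(x_0)$, so that $g = \langle g_1 H \cdot \lambda, s_i(x_0)\rangle$. As $K$ acts transitively on $K/H$ by left multiplication, choose $k \in K$ with $k \cdot s_i(x_0) = g_1 H \cdot \lambda$; then $\sigma \coloneq a_{\theta,H}(k)$ satisfies $\sigma(x_0) = \langle k \cdot s_i(x_0), s_i(x_0)\rangle = \langle g_1 H \cdot \lambda, s_i(x_0)\rangle = g$. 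This route uses only transitivity of the left $K$-action on $K/H$ together with freeness of the $\Lambda_m$-action on the fibres, and so is equally obstruction-free.
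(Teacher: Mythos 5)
Your first argument is exactly the paper's proof: the paper also combines Lemma \ref{lem: tech:ath} b) (surjectivity of $\chi_{\cR(\theta,H)}$, being inverse to $\cR(a_{\theta,H})$) with the description of the image of $\chi$ as the wide subgroupoid $\ev(\Bis(\cR(\theta,H))\times M)$ from \cite[Lemma 4.21 a)]{SchmedingWockel15} (quoted in Example \ref{ex: BisR:inv}), and concludes that this subgroupoid is all of $\cR(\theta,H)$. Your second, ``hands-on'' argument is a genuinely different and equally valid route worth noting: given $g=\langle g_1H, g_2H\rangle$ you normalise the second slot to $s_i(\alpha(g))$ using freeness and transitivity of the $\Lambda_m$-action on the $\pi$-fibres, then use transitivity of the left $K$-action on $K/H$ to find $k$ with $a_{\theta,H}(k)(\alpha(g))=g$. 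This version is more self-contained -- it needs only Lemma \ref{lem:canonical_morphism_into_bisections} (so that $a_{\theta,H}(k)$ is indeed a bisection) and the principal bundle structure from Proposition \ref{prop: const:pb-R}, and in particular it does not invoke the external reference \cite[Lemma 4.21]{SchmedingWockel15} at all; what it gives up is the structural insight that the bisection can be taken from the image of $a_{\theta,H}$ precisely because $\chi_{\cR(\theta,H)}$ and $\cR(a_{\theta,H})$ are mutually inverse.
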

 \begin{proof}
  By Lemma \ref{lem: tech:ath} the morphism $\chi_{\cR (\theta, H)} \colon \cR (\eBis (\cR (\theta,H)) \rightarrow \cR (\theta,H)$ is an isomorphism, whence the assertion follows from \cite[Lemma 4.21 a)]{SchmedingWockel15}
 \end{proof}

\begin{proposition}\label{prop: adj:funct}
 The functor $\cR$ is left adjoint to the functor $\eBis$. 
 The unit of the adjunction is the natural transformation $(a_{\theta,H})_{\cat{TransPairs}_M}$ and the counit of the adjunction is the natural transformation $\chi = (\chi_\cG)_{\cG \in \catltBLie}$.
\end{proposition}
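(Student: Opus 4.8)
The plan is to establish the adjunction $\cR \dashv \eBis$ through the unit–counit (triangular) characterization rather than by constructing a natural bijection of hom-sets from scratch. Concretely, I would recall the standard fact that a pair of functors, together with natural transformations $\eta \colon \id_{\cat{TransPairs}_M} \Rightarrow \eBis \circ \cR$ (the prospective unit) and $\varepsilon \colon \cR \circ \eBis \Rightarrow \id_{\catltBLie}$ (the prospective counit), assembles into an adjunction with $\cR$ left adjoint to $\eBis$ precisely when the two triangle identities
\begin{equation*}
 (\varepsilon \cR) \circ (\cR \eta) = \id_{\cR} \qquad \text{and} \qquad (\eBis \varepsilon) \circ (\eta \eBis) = \id_{\eBis}
\end{equation*}
are satisfied. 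Crucially, both candidate transformations are already available: $\eta = (a_{\theta,H})_{(\theta,H) \in \cat{TransPairs}_M}$ is a natural transformation $\id_{\cat{TransPairs}_M} \Rightarrow \eBis \circ \cR$ by Lemma \ref{lem: tech:ath}(a), and $\varepsilon = \chi = (\chi_\cG)_{\cG \in \catltBLie}$ is a natural transformation $\cR \circ \eBis \Rightarrow \id_{\catltBLie}$ by Lemma \ref{lem: chi:funct}. Hence the proof reduces entirely to checking the two triangle identities componentwise.

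For the first identity, its component at a transitive pair $(\theta, H)$ reads $\chi_{\cR(\theta,H)} \circ \cR(a_{\theta,H}) = \id_{\cR(\theta,H)}$, which is exactly the assertion of Lemma \ref{lem: tech:ath}(b). For the second identity, its component at a groupoid $\cG \in \catltBLie$ reads $\eBis(\chi_\cG) \circ a_{\eBis(\cG)} = \id_{\eBis(\cG)}$; this is supplied by Example \ref{ex: BisR:inv}, where $a_{\eBis(\cG)}$ is identified as an isomorphism of transitive pairs with inverse $\eBis(\chi_\cG)$, so in particular the composite taken in this order is the identity on $\eBis(\cG)$. Once both triangle identities are in place, the unit–counit data $(\cR, \eBis, \eta, \varepsilon)$ constitutes an adjunction, with unit $(a_{\theta,H})$ and counit $\chi = (\chi_\cG)$ exactly as claimed.

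I expect essentially no genuine obstacle here, since each triangle identity coincides verbatim with a result proven earlier. The only points requiring care are bookkeeping ones: matching the source and target of each whiskered transformation, and confirming that the orientation of the composites agrees with the convention $\cR \dashv \eBis$ (left adjoint $\cR$, right adjoint $\eBis$), so that the unit indeed lands in $\eBis \circ \cR$ and the counit emanates from $\cR \circ \eBis$. I would also make explicit that $a_{\eBis(\cG)}$ and $\eBis(\chi_\cG)$ are genuine morphisms in $\cat{TransPairs}_M$—the former by Lemma \ref{lem:canonical_morphisms_commute}, the latter by functoriality of $\eBis$—so that the second triangle identity is an honest equation in $\cat{TransPairs}_M$ rather than merely one in $\cat{LieGroups}_{\Diff(M)}$.
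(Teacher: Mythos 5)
Your proof is correct, and all the facts you lean on are indeed available in the stated lemmas; however, it is packaged differently from the paper's argument. The paper proves the adjunction by exhibiting the hom-set bijection explicitly: it defines $I_{(\theta,H),\cG}(\varphi) = \chi_\cG \circ \cR(\varphi)$ and $J_{\cG,(\theta,H)}(\psi) = \eBis(\psi) \circ a_{\theta,H}$, checks naturality in both variables (via Lemma \ref{lem: chi:funct} and Lemma \ref{lem: tech:ath}~a)), and then computes that $I$ and $J$ are mutually inverse --- a computation whose two key cancellations are precisely $\Bis(\chi_\cG) \circ a_{\eBis(\cG)} = \id_{\Bis(\cG)}$ (Example \ref{ex: BisR:inv}) and $\chi_{\cR(\theta,H)} \circ \cR(a_{\theta,H}) = \id_{\cR(\theta,H)}$ (Lemma \ref{lem: tech:ath}~b)). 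You instead verify the two triangle identities and invoke the general theorem that unit--counit data satisfying them constitutes an adjunction. The ingredients are identical: the two naturality statements and the two composite-equals-identity statements are exactly the same four facts, correctly whiskered and correctly oriented for $\cR \dashv \eBis$. So the two proofs differ only in how the standard equivalence between the two formulations of adjunction is handled: the paper in effect re-derives the relevant instance of it, which has the side benefit of making the adjunction bijection explicit (a form echoed later in the coreflection proposition, where morphisms are again produced by post-composing with $\chi_\cG$), while you cite it as a black box, which is shorter and makes transparent that Lemma \ref{lem: tech:ath}~b) and Example \ref{ex: BisR:inv} are verbatim the triangle identities. Your final remark --- that $a_{\eBis(\cG)}$ and $\eBis(\chi_\cG)$ must be genuine morphisms in $\cat{TransPairs}_M$ so that the second identity holds there and not merely in $\cat{LieGroups}_{\Diff(M)}$ --- is well placed and corresponds exactly to the role Lemma \ref{lem:canonical_morphisms_commute} and Lemma \ref{lem: tech:ath}~a) play in the paper's version.
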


\begin{proof}
 Define for $(\theta,H) \in \cat{TransPairs}_M$ and $\cG \in \catltBLie$ the mapping 
 \begin{displaymath}
  I_{(\theta,H),\cG} \colon \Hom_{\cat{TransPairs}_{M}} ( (\theta , H) , \eBis (\cG)) \rightarrow \Hom_{\catltBLie} (\cR (\theta,H),\cG ),\quad \varphi \mapsto \chi_\cG \circ \cR(\varphi). 
 \end{displaymath}
 We obtain a family of maps which is natural in $\cG$ since the family $\chi = (\chi_\cG)$ is natural by Lemma \ref{lem: chi:funct}. 
 Clearly the family is also natural in the transitive pair $(\theta,H)$.
 To construct an inverse of $I_{(\theta,H),\cG}$ recall from Lemma \ref{lem:canonical_morphisms_commute} that $a_{\theta,H} \colon (\theta,H) \rightarrow (\beta_\cR \circ \ev , \Bisf{m} (\cR (\theta,H)))$ is a morphism of transitive pairs.
 Hence the map 
  \begin{displaymath}
   J_{\cG , (\theta,H)} \colon \Hom_{\catltBLie} (\cR (\theta,H),\cG) \rightarrow  \Hom_{\cat{TransPairs}_{M}} ( (\theta , H) , \eBis (\cG)),\quad \psi \mapsto \eBis (\psi) \circ a_{\theta, H}
  \end{displaymath}
 makes sense. 
 Clearly $J_{\cG , (\theta,H)}$ is natural in $\cG$ and it is natural in $(\theta,H)$ by Lemma \ref{lem: tech:ath} a).
 To see that $J_{\cG , (\theta,H)}$ and $I_{(\theta,H),\cG} $ are mutually inverse, we use the results obtained so far and compute 
  \begin{align*}
    J_{\cG , (\theta,H)}  \circ  I_{(\theta,H),\cG} (\varphi) &\stackrel{\hphantom{\text{Lemma } \ref{lem: tech:ath}\ a)}}{=} \eBis (\chi_\cG \circ \cR (\varphi)) \circ a_{\theta,H} = \eBis (\chi_\cG) \circ  \eBis (\cR (\varphi)) \circ a_{\theta,H} \\
							      &\stackrel{\text{Lemma } \ref{lem: tech:ath}\ a)}{=} \underbrace{\Bis (\chi_\cG) \circ a_{\eBis (\cG)}}_{= \id_{\Bis (\cG)} \text{by Example \ref{ex: BisR:inv}}} \circ \varphi = \varphi , \\
    I_{(\theta,H),\cG} \circ  J_{\cG , (\theta,H)} (\psi)      &\stackrel{\hphantom{\text{Lemma } \ref{lem: tech:ath} \ a)}}{=} \chi_\cG \circ \cR (\eBis (\psi) \circ a_{\theta,H}) = \chi_\cG \circ \cR (\eBis (\psi)) \circ \cR (a_{\theta,H}) 	\\
							      &\stackrel{\text{Lemma } \ref{lem: chi:funct}\hphantom{\ a)}}{=} \psi \circ \underbrace{\chi_{\cR (\theta,H)}  \circ \cR (a_{\theta,H})}_{= \id_{\cR (\theta,H)} \text{ by Lemma \ref{lem: tech:ath} b)}} = \psi .
  \end{align*}
\end{proof}
 
\begin{corollary}
 The functor
 $\eBis\from \catltBLie \to \cat{TransPairs}_{M}$
 preserves limits and the functor $\cR$ preserves colimits. In particular, $\eBis$ preserves kernels and pull-backs and $\cR$ preserves cokernels and push-outs.
\end{corollary}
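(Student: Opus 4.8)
The plan is to deduce this directly from the adjunction $\cR \dashv \eBis$ established in Proposition~\ref{prop: adj:funct}, invoking the standard categorical principle that right adjoints preserve limits and left adjoints preserve colimits. No further geometric input is needed; the entire argument is formal.

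First I would recall Proposition~\ref{prop: adj:funct}, which exhibits $\cR$ as left adjoint to $\eBis$, with unit $(a_{\theta,H})_{(\theta,H)\in \cat{TransPairs}_M}$ and counit $\chi = (\chi_\cG)_{\cG \in \catltBLie}$. Once the adjunction $\cR \dashv \eBis$ is in hand, the conclusion is immediate from the general fact that a functor possessing a left adjoint preserves all limits that exist in its source, and dually that a functor possessing a right adjoint preserves all colimits. Since $\eBis$ is a right adjoint, it therefore preserves limits; since $\cR$ is a left adjoint, it preserves colimits.

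For the final clause I would observe only that the cited special cases are instances of the general statement: a pull-back is the limit of a cospan diagram, and a kernel is the limit of the appropriate equaliser diagram (the equaliser of a morphism with the trivial morphism into the relevant group object), so both are preserved by $\eBis$; dually, a push-out is the colimit of a span and a cokernel the colimit of the corresponding coequaliser diagram, so both are preserved by $\cR$. There is essentially no obstacle here, as the statement is the purely formal shadow of the adjunction; the only point to keep in mind is that preservation is asserted only for those (co)limits that actually exist in the respective categories, which is precisely what the general theorem delivers.
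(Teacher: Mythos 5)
Your proposal is correct and matches the paper's (implicit) argument exactly: the corollary is stated as an immediate formal consequence of the adjunction $\cR \dashv \eBis$ from Proposition~\ref{prop: adj:funct}, via the standard fact that right adjoints preserve limits and left adjoints preserve colimits, with kernels/pull-backs and cokernels/push-outs as the named special cases. Nothing further is needed.
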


The results obtained so far can be used to establish a category equivalence between certain subcategories of transitive pairs and locally trivial Banach Lie groupoids.
Let us first define these subcategories.

\begin{definition}
 Define the full subcategory $\catltevBLie$ of $\catltBLie$ whose objects are Lie groupoids $\cG = (G \toto M)$ which admit a bisection through each arrow, i.e.\ $\forall g \in G$ there is $\sigma \in \Bis (\cG)$ such that $\sigma (\alpha (g))= g$.  
\end{definition}

\begin{remark}\label{rem: subcat}
 \begin{enumerate}
  \item Corollary \ref{cor: iso:tpair} asserts that the functor $\cR$ takes its image in $\catltevBLie$. 
  By abuse of notation we will in the following identify $\cR$ with a functor $\cR \colon \cat{TransPairs}_M \rightarrow \catltevBLie$.
  \item If a Banach Lie groupoid $\cG$ (with compact base) is source connected, i.e.\ the fibres of the source projection are connected, then $\cG$ is contained in $\catltevBLie$ by \cite[Theorem 2.14]{SchmedingWockel15}. 
 \end{enumerate}
\end{remark}

Objects in the subcategory $\catltevBLie$ admit \textquotedblleft enough\textquotedblright\ bisections, i.e.\ by Example \ref{ex: BisR:inv} these groupoids can be completely recovered from the transitive pair constructed from their bisections.
Passing to the subcategory $\catltevBLie$, we restrict to all Lie groupoids which can be reconstructed by the functor $\cR$.
Note that Example \ref{ex: BisR:inv} also shows that the functor $\cR \circ \eBis$ associates to every locally trivial Banach-Lie groupoid an open subgroupoid from $\catltevBLie$.
We can view this subgroupoid as a selection of a convenient subobject.
Following \cite[p.56]{MR2240597} this should correspond  on the level of categories to $\catltevBLie$ being a coreflected subcategory of $\catltBLie$.

\begin{proposition}
 The functor $\cE \coloneq \cR \circ  \eBis \colon \catltBLie \rightarrow \catltevBLie$ is right adjoint to the inclusion $\catltevBLie \rightarrow \catltBLie$. In particular, $\catltevBLie$ is a coreflected subcategory of $\catltBLie$.
\end{proposition}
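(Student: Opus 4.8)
The plan is to take the counit $\chi$ of the adjunction $\cR \dashv \eBis$ already established in Proposition \ref{prop: adj:funct} and promote it to the counit of the desired adjunction $\iota \dashv \cE$, where $\iota \colon \catltevBLie \to \catltBLie$ denotes the inclusion. First I would record the two structural facts that make this possible. By Remark \ref{rem: subcat} a) the functor $\cE = \cR \circ \eBis$ actually takes values in $\catltevBLie$, and since $\catltevBLie$ is a \emph{full} subcategory of $\catltBLie$, the morphism sets $\Hom_{\catltevBLie}(\cH,\cE(\cG))$ and $\Hom_{\catltBLie}(\cH,\cE(\cG))$ coincide. Moreover, for $\cH \in \catltevBLie$ every arrow lies in the image of some bisection, so $\ev(\Bis(\cH)\times M)$ exhausts $\cH$; hence Example \ref{ex: BisR:inv} shows that the component $\chi_\cH \colon \cR(\eBis(\cH)) \to \cH$ is an \emph{isomorphism}.

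Next I would construct, for $\cH \in \catltevBLie$ and $\cG \in \catltBLie$, the map
\begin{equation*}
 \Hom_{\catltBLie}(\cH,\cG) \longrightarrow \Hom_{\catltevBLie}(\cH,\cE(\cG)), \quad f \longmapsto \cE(f)\circ \chi_\cH^{-1},
\end{equation*}
with candidate inverse $g \mapsto \chi_\cG \circ g$ (which lands in $\Hom_{\catltBLie}(\cH,\cG)$ since $\chi_\cG \colon \cE(\cG)\to \cG$). One composite is immediate from naturality of $\chi \colon \cE \Rightarrow \id_{\catltBLie}$: for $f \colon \cH \to \cG$ we have $\chi_\cG \circ \cE(f) = f \circ \chi_\cH$, whence $\chi_\cG \circ (\cE(f)\circ \chi_\cH^{-1}) = f$. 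Naturality of both assignments in $\cH$ and in $\cG$ reduces in the same way to naturality of $\chi$ and functoriality of $\cE$, and is routine.

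The crux is the other composite, $\cE(\chi_\cG \circ g)\circ \chi_\cH^{-1} = g$ for $g \colon \cH \to \cE(\cG)$. Writing $\cE(\chi_\cG\circ g) = \cE(\chi_\cG)\circ\cE(g)$ and applying naturality of $\chi$ at $g$ (note $\chi_{\cE(\cG)}$ is invertible, as $\cE(\cG)\in\catltevBLie$), this rearranges to $\cE(\chi_\cG)\circ \chi_{\cE(\cG)}^{-1}\circ g$. Thus everything hinges on the identity $\cE(\chi_\cG) = \chi_{\cE(\cG)}$, i.e.\ the idempotency of the comonad $\cE$, which I expect to be the only real obstacle. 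It is, however, already at hand: by Corollary \ref{cor: nattrans} the inverse of $\eBis(\chi_\cG)$ is $a_{\eBis(\cG)}$, so $\cE(\chi_\cG) = \cR(\eBis(\chi_\cG)) = \cR(a_{\eBis(\cG)})^{-1}$, while Lemma \ref{lem: tech:ath} b) applied to the transitive pair $\eBis(\cG)$ yields $\chi_{\cE(\cG)} = \chi_{\cR(\eBis(\cG))} = \cR(a_{\eBis(\cG)})^{-1}$; the two sides therefore agree.

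Finally I would conclude that the displayed bijections, being natural in both arguments, exhibit $\cE$ as right adjoint to $\iota$, with counit $\chi$ and unit the isomorphisms $\chi_\cH^{-1} \colon \cH \to \cE(\iota(\cH))$. Equivalently, one may phrase the whole argument through the two triangle identities: the first is trivial because $\chi_\cH$ is invertible, and the second is precisely the identity $\cE(\chi_\cG) = \chi_{\cE(\cG)}$ verified above. Since $\iota$ is the inclusion of a full subcategory and admits the right adjoint $\cE$, the subcategory $\catltevBLie$ is coreflective in $\catltBLie$, which is the assertion.
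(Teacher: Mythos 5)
Your proof is correct, and it establishes the same hom-set bijection as the paper --- post-composition with $\chi_\cG$ --- but it constructs the inverse by a genuinely different route. The paper's proof is geometric: it uses that $\chi_\cG$ restricts to an isomorphism $\chi_\cG^{res}$ of $\cE(\cG)$ onto the open subgroupoid $I_\cG = \ev(\Bis(\cG)\times M)$ of $\cG$ (Example \ref{ex: BisR:inv}), observes that any $\psi\from \cK\to\cG$ with $\cK\in\catltevBLie$ necessarily lands in $I_\cG$ (every arrow $k$ of $\cK$ is $\sigma_k(\alpha(k))$ for some bisection $\sigma_k$, and $\Bis(\psi)(\sigma_k)$ is a bisection of $\cG$), and then defines the inverse as $\psi\mapsto(\chi_\cG^{res})^{-1}\circ\psi$. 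You instead define the inverse formally as $f\mapsto\cE(f)\circ\chi_\cH^{-1}$ and reduce the whole argument to the idempotency identity $\cE(\chi_\cG)=\chi_{\cE(\cG)}$, which you correctly extract from the paper's own results: Corollary \ref{cor: nattrans} gives $\eBis(\chi_\cG)=a_{\eBis(\cG)}^{-1}$, hence $\cE(\chi_\cG)=\cR(a_{\eBis(\cG)})^{-1}$, while Lemma \ref{lem: tech:ath} b) applied to the transitive pair $\eBis(\cG)$ gives $\chi_{\cE(\cG)}=\cR(a_{\eBis(\cG)})^{-1}$ as well. The paper's route yields extra geometric insight (morphisms out of objects of $\catltevBLie$ factor through the image subgroupoid), and avoids invoking Lemma \ref{lem: tech:ath} b); yours isolates the abstract mechanism --- an idempotent comonad whose counit is invertible on its essential image induces a coreflection --- so that once the identity $\cE(\chi_\cG)=\chi_{\cE(\cG)}$ is in hand, no further geometric input is needed, and the unit, counit and triangle identities of the adjunction come out explicitly. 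Both treatments of naturality are equally routine, resting on Lemma \ref{lem: chi:funct} and functoriality of $\cE$.
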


\begin{proof}
 Consider $\cK \in \catltevBLie$ and $\cG \in \catltBLie$ and define 
  \begin{displaymath}
   I_{\cK,\cG} \colon \Hom_{\catltevBLie} (\cK , \cE (\cG) ) \rightarrow \Hom_{\catltBLie} (\cK , \cG) ,\quad \varphi \mapsto \chi_\cG \circ \varphi .
  \end{displaymath}
 The family $(I_{\cK , \cG})_{\cK,\cG}$ is natural in $\cK$ and $\cG$ by Lemma \ref{lem: chi:funct}.
 Let us construct an inverse for $I_{\cK,\cG}$.
 For each Lie groupoid $\cG$ the map $\chi_\cG \colon \cR (\eBis (\cG)) \rightarrow \cG$ restricts by \cite[Lemma 4.21]{SchmedingWockel15} to an isomorphism $\chi_\cG^{res}\colon \cR (\eBis (\cG)) \rightarrow I_\cG \opn \cG$ of Lie groupoids onto its image $I_\cG$.
 Recall from loc.cit.\ that $\chi_\cK$ is an isomorphism for all $\cK$ in $\catltevBLie$, i.e.\ $\chi_\cK = \chi_\cK^{res}$ and thus $I_\cK = \cK$.
 Now for a Lie groupoid morphism $\psi \colon \cK \rightarrow \cG$ we use that every arrow $k$ of $\cK$ is in the image of a bisection to obtain 
  \begin{displaymath}
     \psi (k) = \psi (\sigma_k (\alpha (k)) = \underbrace{\Bis (\psi) (\sigma_k)}_{\in \Bis (\cG)} (\alpha (k)) \in I_\cG                                                                                                                                            
  \end{displaymath}
 Hence the mapping 
 \begin{displaymath}
  J_{\cG , \cK} \colon \Hom_{\catltBLie} (\cK , \cG) \rightarrow \Hom_{\catltevBLie} (\cK , \cE (\cG)) ,\quad \psi \mapsto (\chi_\cG^{res})^{-1} \circ \psi
 \end{displaymath}
 makes sense and the maps $J_{\cG,\cK}$ and $I_{\cK,\cG}$ are mutual inverses.
 Moreover, the family $(J_{\cG , \cK})_{\cG,\cK}$ is natural in $\cK$ and $\cG$ again by Lemma \ref{lem: chi:funct}.
 Thus $\cE$ is right adjoint to the inclusion $\catltevBLie \rightarrow \catltBLie$.
\end{proof}

Finally, we will state the equivalence of categories already alluded to.

\begin{theorem}
 Let $\cC$ be the essential image of $\eBis$.
 Then we obtain an induced equivalence of categories 
  \begin{displaymath}
   \xymatrix{ \cat{TransPairs}_M \ar@<+.7ex>[rr]^-{\cR} & &\catltBLie \ar@<+.7ex>[ll]^-{\eBis}\\
		\cC \ar@{^(->}[u] \ar[rr]^-{\cong} & &   \catltevBLie \ar@{^(->}[u] }
  \end{displaymath}
\end{theorem}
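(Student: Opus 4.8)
The plan is to derive the equivalence formally from the adjunction $\cR \dashv \eBis$ of Proposition~\ref{prop: adj:funct} by restricting to the fixed-point subcategories of its unit and counit. Recall the general categorical fact: for any adjunction with unit $\eta$ and counit $\epsilon$, the two adjoints restrict to an equivalence between the full subcategory of objects on which $\eta$ is an isomorphism and the full subcategory of objects on which $\epsilon$ is an isomorphism. This is immediate from the triangle identities, which here read $\chi_{\cR(\theta,H)} \circ \cR(a_{\theta,H}) = \id$ and $\eBis(\chi_\cG) \circ a_{\eBis(\cG)} = \id$, and which force $\chi_{\cR(\theta,H)}$ to be an isomorphism whenever $a_{\theta,H}$ is (and dually). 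Since the unit of our adjunction is $a = (a_{\theta,H})$ and the counit is $\chi = (\chi_\cG)$, the entire problem reduces to identifying these two fixed-point subcategories with $\cC$ and $\catltevBLie$ respectively.

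First I would identify the counit-isomorphism subcategory of $\catltBLie$ with $\catltevBLie$. By Example~\ref{ex: BisR:inv} the morphism $\chi_\cG$ is an isomorphism onto the open wide subgroupoid $\ev(\Bis(\cG) \times M)$ of those arrows lying in the image of some bisection; hence $\chi_\cG$ is an isomorphism exactly when $\cG$ admits a bisection through each arrow, that is, exactly when $\cG \in \catltevBLie$. Equivalently, if $\chi_\cG$ is an isomorphism then $\cG \cong \cR(\eBis(\cG))$, and the latter lies in $\catltevBLie$ by Corollary~\ref{cor: iso:tpair} (see part~(a) of Remark~\ref{rem: subcat}), a subcategory which is full and closed under isomorphism.

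Next I would identify the unit-isomorphism subcategory of $\cat{TransPairs}_M$ with the essential image $\cC$ of $\eBis$. The inclusion $\{a\text{ iso}\} \subseteq \cC$ is trivial, since $a_{\theta,H}$ iso gives $(\theta,H) \cong \eBis(\cR(\theta,H)) \in \cC$. For the reverse inclusion, which is the only genuinely non-formal point, I would invoke the second assertion of Example~\ref{ex: BisR:inv}, namely that $a_{\eBis(\cG)}$ is an isomorphism of transitive pairs, with inverse $\eBis(\chi_\cG)$, for \emph{every} $\cG \in \catltBLie$ and not merely for those in $\catltevBLie$. Given any $(\theta,H) \cong \eBis(\cG)$ in $\cC$, naturality of $a$ (part~(a) of Lemma~\ref{lem: tech:ath}) expresses $a_{\theta,H}$ as a composite of the isomorphism $a_{\eBis(\cG)}$ with the chosen isomorphism and the inverse of its image under $\eBis \circ \cR$; hence $a_{\theta,H}$ is itself an isomorphism. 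This is the crucial step, where the specific geometry enters rather than pure adjunction nonsense.

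Finally I would assemble the equivalence. The restricted functors are well defined and compatible with the vertical inclusions: $\cR$ lands in $\catltevBLie$ by Corollary~\ref{cor: iso:tpair}, while $\eBis(\cG) \in \cC$ for $\cG \in \catltevBLie$ by definition of the essential image; this compatibility is precisely the commutativity asserted in the diagram. By the general fixed-point principle the restrictions $\cR|_\cC$ and $\eBis|_{\catltevBLie}$ are mutually quasi-inverse equivalences, the natural isomorphisms $\id_\cC \cong \eBis \circ \cR|_\cC$ and $\cR \circ \eBis|_{\catltevBLie} \cong \id$ being supplied by the restrictions of the unit $a$ and the counit $\chi$ (cf.\ Lemma~\ref{lem: chi:funct} and Lemma~\ref{lem: tech:ath}). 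As all substantive work has already gone into establishing the adjunction and the isomorphism properties of $a$ and $\chi$, the only remaining task is the bookkeeping of the two identifications above, and I anticipate no serious obstacle.
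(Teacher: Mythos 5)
Your proposal is correct and takes essentially the same route as the paper: the paper also restricts the adjoint pair $(\cR, \eBis)$ to $\cC$ and $\catltevBLie$ and concludes by observing that the composites are naturally isomorphic to the identities via the counit $\chi$ (Lemma \ref{lem: chi:funct}) and the unit $a$, using Corollary \ref{cor: iso:tpair} and Example \ref{ex: BisR:inv}. Your packaging through the general fixed-point principle for adjunctions adds nothing structurally new, but it does spell out carefully the one step the paper's terse proof leaves implicit, namely transporting the isomorphism property of $a_{\eBis(\cG)}$ along naturality (Lemma \ref{lem: tech:ath}~a)) to conclude that $a_{\theta,H}$ is an isomorphism for \emph{every} object of the essential image $\cC$, not merely for objects literally of the form $\eBis(\cG)$.
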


\begin{proof}
 By Remark \ref{rem: subcat} the functor $\cR$ restricts to a functor $\cR^{res} \colon \cC \rightarrow \catltevBLie$ and by construction $\eBis$ restricts to a functor $\eBis^{\res} \coloneq \eBis|_{\catltevBLie}^\cC$.
 From Corollary \ref{cor: iso:tpair} we deduce that $\eBis^{res} \circ \cR^{res} \cong \id_\cC$. 
 Conversely, Lemma \ref{lem: chi:funct} implies that $\cR^{res} \circ \eBis^{res} \cong \id_{\catltevBLie}$.
\end{proof}

\begin{remark}\label{rem: non:com}
 In this section we have so far always assumed that $M$ is connected. 
 Dropping this assumption the assertion of Lemma \ref{lem: always:trans} becomes false, whence the functor $\eBis$ can not be defined on $\catltBLie$ if $M$ is not connected.
 
 However, for $\cG$ in $\catltevBLie$ the pair $(\beta_\cG \circ \ev, \Bisf{m} (\cG))$ still is a transitive pair and thus $\eBis$ makes sense on this subcategory for non-connected $M$.
 Hence if we restrict $\cR$ and $\eBis$ to the full subcategory $\catltevBLie$, one easily proves statements similar to the ones obtained in the present section for non-connected $M$.
 In fact, all proofs given in this section carry over verbatim to the more general setting if we restrict to this subcategory. 
 Note that on $\catltevBLie$ the natural transformation $\chi$ (see Lemma \ref{lem: chi:funct}) induces a natural isomorphism $\cR \circ \eBis|_{\catltevBLie} \stackrel{\cong}{\Longrightarrow} \id_{\catltevBLie}$.
 Using this one can then prove that the functor $\eBis$ restricted to the subcategory reflects isomorphisms. 
\end{remark}
  
  \section*{Acknowledgements} The authors want to thank David Roberts for proposing to view
the functor $\cB$ from Remark \ref{rem: bis:act} as a comonad on a suitable category. The research on this paper was partially supported by the DFG Research
  Training group 1670 \emph{Mathematics inspired by String Theory and Quantum Field Theory},
  the Scientific Network \emph{String Geometry} (DFG project code NI 1458/1-1) and the project
  \emph{Topology in Norway} (Norwegian Research Council project 213458).

  \addcontentsline{toc}{section}{References}

 \end{document}